\newcommand{\Extra}[1]{}
\newtheorem{theorem}{Theorem}[section]
\newtheorem{lemma}[theorem]{Lemma}
\newtheorem{proposition}[theorem]{Proposition}
\newtheorem{corollary}[theorem]{Corollary}
\theoremstyle{definition}
\newtheorem{remark}[theorem]{Remark}
\newtheorem{example}[theorem]{Example}
\newcommand{\OCMIV}{Fedorova/etal:2012ICML}
\newcommand{\OCMXV}{Vovk:arXiv1612}
\renewcommand{\OCMXV}{Vovk:1986-full}
\newcommand{\st}{:}
\newcommand{\R}{\mathbb{R}}
\newcommand{\N}{\mathbb{N}}
\DeclareMathOperator{\Prob}{\mathbb{P}}
\DeclareMathOperator{\Expect}{\mathbb{E}}
\DeclareMathOperator{\UiidP}{\mathbb{P}^{\mathrm{iid}}}
\DeclareMathOperator{\UEP}{\mathbb{P}^{\mathrm{exch}}}
\DeclareMathOperator{\UCP}{\mathbb{P}^{\mathrm{conf}}}
\DeclareMathOperator{\diid}{\mathit{d}^{\mathrm{iid}}}
\DeclareMathOperator{\dE}{\mathit{d}^{\mathrm{exch}}}
\DeclareMathOperator{\dbin}{\mathit{d}^{\mathrm{bin}}}
\DeclareMathOperator{\Diid}{\mathit{D}^{\mathrm{iid}}}
\DeclareMathOperator{\Dbin}{\mathit{D}^{\mathrm{bin}}}
\newcommand*{\KP}{\mathit{KP}}
\newcommand*{\dd}{\,\mathrm{d}}
\newcommand*{\bin}{\mathrm{bin}}
\newcommand{\FFF}{\mathcal{F}}
\newcommand{\GGG}{\mathcal{G}}
  \title{Testing randomness}
  \author{Vladimir Vovk}
\begin{document}
  \maketitle

\begin{abstract}
  The hypothesis of randomness is fundamental in statistical machine learning
  and in many areas of nonparametric statistics;
  it says that the observations are assumed to be independent and coming from the same unknown probability distribution.
  This hypothesis is close, in certain respects, to the hypothesis of exchangeability,
  which postulates that the distribution of the observations is invariant with respect to their permutations.
  This paper reviews known methods of testing the two hypotheses concentrating on the online mode of testing,
  when the observations arrive sequentially.
  All known online methods for testing these hypotheses are based on conformal martingales,
  which are defined and studied in detail.
  The paper emphasizes conceptual and practical aspects
  and states two kinds of results.
  Validity results limit the probability of a false alarm or the frequency of false alarms
  for various procedures based on conformal martingales,
  including conformal versions of the CUSUM and Shiryaev--Roberts procedures.
  Efficiency results establish connections between randomness, exchangeability,
  and conformal martingales.

   The version of this paper at \url{http://alrw.net} (Working Paper 24)
   is updated most often.
\end{abstract}

\section{Introduction}
\label{sec:introduction}

A standard assumption in several areas of data science
has been the assumption that the data are generated in the IID fashion,
i.e., independently from the same distribution.
This assumption is also known as the assumption of randomness
(see, e.g., \cite{Wald/Wolfowitz:1943}, \cite[Section~7.1]{Lehmann:1975} and \cite{Vovk/etal:2005book}).
In this paper we are interested in testing this assumption.

The notion of randomness has been at the centre of discussions of the foundations of probability
for at least 100 years, since Richard von Mises's 1919 article \cite{Mises:1919}.
For von Mises, random sequences (\emph{collectives} in his terminology)
served as the basis for probability theory and statistics,
and other notions, such as probability, were defined in terms of collectives.
Random sequences have been eclipsed in the foundations of mathematical probability theory by measure theory
since Kolmogorov's 1933 \emph{Grundbegriffe} \cite{Kolmogorov:1933},
but the conceptual side of randomness has been explored
in the algorithmic theory of randomness (also initiated by Kolmogorov).
We will discuss the conceptual side in Section~\ref{sec:IID-exchangeability}
and then in Appendix~\ref{sec:ATR},
but we start the main part of the paper with practical methods of detecting nonrandomness.

The most familiar mode of testing randomness in statistics is where we are given a batch of data
coming from a putative power probability measure.
In Section~\ref{sec:batch} we will see how the standard statistical tests
for real-valued observations can be adapted to more general observation spaces.

We then move on to testing randomness online,
assuming that observations arrive sequentially.
Known methods of online testing of randomness are based on so-called conformal martingales.
Conformal martingales are constructed on top of conventional machine-learning algorithms
and have been used as a means of detecting deviations from randomness
both in theoretical work
(see, e.g., \cite[Section 7.1]{Vovk/etal:2005book}, \cite{Ho/Wechsler:2010}, \cite{\OCMIV})
and in practice
(in the framework of the Microsoft Azure module on time series anomaly detection \cite{Azure}).
They provide an online measure of the amount of evidence found against the hypothesis of randomness:
if the assumption of randomness is satisfied,
a fixed nonnegative conformal martingale with a positive initial value
is not expected to increase its initial value manyfold;
on the other hand, if the hypothesis of randomness is seriously violated,
a properly designed nonnegative conformal martingale with a positive initial value
can be expected to increase its value substantially.
Correspondingly, we have two desiderata for a nonnegative conformal martingale $S$:
\begin{itemize}
\item
  \textbf{Validity} is satisfied automatically:
  $S$ is not expected to ever increase its initial value by much,
  under the hypothesis of randomness.
\item
  But we also want to have \textbf{efficiency},
  e.g., to have $S_n/S_0$ large with a high probability,
  if the hypothesis of randomness is violated.
\end{itemize}
In the language of statistical hypothesis testing,
validity corresponds to controlling the error of the first kind,
and in the Neyman--Pearson paradigm, efficiency corresponds to controlling the error of the second kind
(see, e.g., \cite{Lehmann/Romano:2005}).
In this paper a more Fisherian understanding of efficiency
(introduced in Sections~\ref{sec:IID-exchangeability}--\ref{sec:probability})
will be more useful.

\begin{figure}[tb]
  \centering
  \unitlength 0.50mm
  \begin{picture}(130,60)
    \put(5,20){\framebox(10,10)[cc]{2}}
    \put(45,20){\framebox(10,10)[cc]{3}}
    \put(75,35){\framebox(10,10)[cc]{4}}
    \put(75,5){\framebox(10,10)[cc]{5}}
    \put(115,5){\framebox(10,10)[cc]{6}}
    \put(15,25){\vector(1,0){30}}
    \put(85,10){\vector(1,0){30}}
    \put(55,30){\vector(2,1){20}}
    \put(55,20){\vector(2,-1){20}}
    \put(90,35){\makebox(50,10)[cc]{weak validity}}
    \put(130,5){\makebox(40,10)[cc]{efficiency}}
  \end{picture}
  \caption{The structure of this paper (apart from the introduction and conclusion);
    the numbers refer to Sections~\ref{sec:batch}--\ref{sec:probability}.}
  \label{fig:structure}
\end{figure}
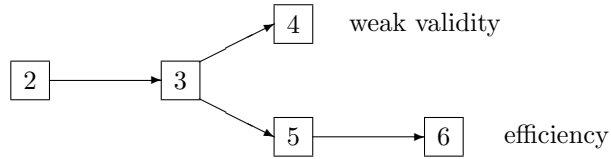

Conformal martingales are defined and their validity is established in Section~\ref{sec:martingales}.
Efficiency of a specific conformal martingale is not guaranteed
and depends on the quality of the underlying machine-learning algorithm.
Our exposition then branches in two directions,
with Section~\ref{sec:SR} relaxing the requirement of validity
and Sections~\ref{sec:IID-exchangeability}--\ref{sec:probability} discussing efficiency;
see Fig.~\ref{fig:structure}.

It is often argued that the kind of validity enjoyed by nonnegative martingales is too strong,
and we should instead be looking for a testing procedure
that is valid only in the sense of not raising false alarms too often.
In the context of testing randomness,
the interpretation of the data-generating process adopted in Section~\ref{sec:SR}
is that at first the data is IID,
but starting from some moment $T$ it ceases to be IID;
the special case $T=0$ describes the situation where the IID assumption is never satisfied
(and so this interpretation does not restrict generality).
We want our procedures to be efficient in the sense of raising an alarm soon after
the null hypothesis (such as the assumption of randomness) becomes violated;
both validity and efficiency can be required to hold with high probability or on average.
In Section~\ref{sec:SR}
conformal martingales are adapted to such less demanding requirements of validity
using the standard CUSUM and Shiryaev--Roberts procedures.

Sections~\ref{sec:IID-exchangeability} and \ref{sec:probability} discuss
the much more difficult question of efficiency
in the context of the strongly valid procedures of Section~\ref{sec:martingales}.
We ask how much we can potentially lose when using conformal martingales
as compared with unrestricted testing of either IID or exchangeability.
We will see that at a crude scale customary in the algorithmic theory of randomness
we do not lose much when restricting our attention to testing randomness with conformal martingales.

Our main running example will be the well-known USPS dataset of handwritten digits
(see, e.g., \cite[Section B.1]{Vovk/etal:2005book}),
which is known to be non-random.
In Section~\ref{sec:batch} we check that it is really non-random:
combining Bartels's ratio test with the Nearest Neighbor method
we obtain a tiny p-value.
This fact, however, is not particularly useful.
In Section~\ref{sec:martingales} we report the performance
of a nonnegative conformal martingale on the USPS dataset;
it attains a huge final value.
Such martingales are potentially very useful in practice
allowing us to decide when a trained predictor needs to be retrained.
The performance guarantees for prediction algorithms
in mainstream machine learning are proven under the assumption of randomness,
and after the deployment of such a predictor we might want to monitor
whether the new data remains IID.
As soon as it ceases to be IID, it is wise to retrain the predictor.

We will also use a much less well-known dataset
called ``Absenteeism at work'' (abbreviated to \texttt{Absenteeism})
and available at the UCI Machine Learning Repository \cite{UCI:2020}.
The data was collected from July 2007 to July 2010 at a courier company in Brazil.
We can imagine the management of the company monitoring the absences of the workforce.
If the pattern of absences loses its stability (ceases to be IID),
they might want to raise an alarm and investigate what is going on.
In Section~\ref{sec:martingales} we will construct a simple conformal martingale that finds
definitive (to use Jeffreys's \cite[Appendix B]{Jeffreys:1961} expression) evidence
against the hypothesis of randomness for the dataset.

Connections with the algorithmic theory of randomness will be explained in Appendix~\ref{sec:ATR}.
The main part of this paper will not use the algorithmic notion of randomness;
however, as customary in the algorithmic theory of randomness,
in our discussions of efficiency we will concentrate on the binary case
and on the case of a finite time horizon $N$.
These restrictions go back to Kolmogorov (cf.\ \cite[Appendix A]{\OCMXV});
it would be interesting to eliminate them after a complete exploration
of the binary and finite-horizon case (but we are not at that stage as yet).

\begin{remark}[terminology related to the hypothesis of randomness]
  In this paper we discuss two main hypotheses about the data,
  randomness and exchangeability.
  The terminology related to the former is less standardized
  and will be summarized in this remark.
  We will use constantly three closely related terms,
  ``randomness'', ``IID'', and ``power'' (used adjectivally).
  \emph{Power distributions} are those of the form $Q^N$,
  where $N$ is a natural number,
  or $Q^{\infty}$.
  Power distributions generate IID observations.
  Finally, the hypothesis of randomness means that the data-generating distribution
  is a power distribution (and so the observations are IID).
\end{remark}

\section{Conformal tests of randomness in the batch mode}
\label{sec:batch}

As already mentioned,
in this paper we are mainly interested in the \emph{online} mode of testing:
we assume that observations arrive sequentially,
and after each observation we evaluate the degree
to which the hypothesis of randomness has been discredited.
We will discuss this online setting starting from the next section,
but in this section discuss the \emph{batch} setting, which is more standard in statistics.

Let us fix $N\in\N:=\{1,2,\dots\}$, the size of the batch.
We would like to test the hypothesis of randomness using $N$ observations.
There are numerous standard tests of randomness in statistics:
see, e.g., \cite[Chapter 7]{Lehmann:1975}.
These tests, however, are usually applicable only to batches of real numbers,
whereas in this paper we are interested in more general observations.
In particular we will apply them to the USPS dataset of handwritten digits
discussed at the end of Section~\ref{sec:introduction}.
To reduce the general case to real-valued observations
we can apply basic ideas of conformal prediction \cite{Vovk/etal:2005book}.
Let $\mathbf{Z}$ be a measurable space, the space of observations.

A \emph{nonconformity measure} is a measurable function $A$
mapping any finite sequence $(z_1,\dots,z_n)\in\mathbf{Z}^n$ of observations of any length $n\in\N$
to a sequence of numbers $(\alpha_1,\dots,\alpha_n)\in\R^n$ of the same length
that is \emph{equivariant} in the following sense:
for any $n\in\N$ and any permutation $\pi:\{1,\dots,n\}\to\{1,\dots,n\}$,
\begin{equation}\label{eq:equivariance}
  A(z_1,\dots,z_n)
  =
  (\alpha_1,\dots,\alpha_n)
  \Longrightarrow
  A(z_{\pi(1)},\dots,z_{\pi(n)})
  =
  (\alpha_{\pi(1)},\dots,\alpha_{\pi(n)}).
\end{equation}
Intuitively, $\alpha_i$ (the \emph{nonconformity score} of $z_i$)
tells us how strange $z_i$ looks as an element of the sequence $(z_1,\dots,z_n)$.
(At this time the only relevant value is $n:=N$,
but in the next section we will need all $n\in\N$.)

Any conventional machine-learning algorithm can be turned
(usually in more than one way)
into a nonconformity measure.
For example, suppose each observation $z_i$ (an element of the USPS dataset) consists of two components,
$z_i=(x_i,y_i)$,
where $x_i\in[-1,1]^{16\times16}$ is a hand-written digit
(a $16\times16$ matrix of pixels, each pixel represented by its brightness on the scale $[-1,1]$)
and $y_i\in\{0,\dots,9\}$ is its label
(the true digit represented by the image).
The 1-Nearest Neighbour algorithm can be turned into the nonconformity measure
\begin{equation}\label{eq:1-NN}
  \alpha_i
  :=
  \frac
  {\min_{j\in\{1,\dots,n\}:y_j=y_i,j\ne i}d(x_i,x_j)}
  {\min_{j\in\{1,\dots,n\}:y_j\ne y_i}d(x_i,x_j)},
\end{equation}
where $d(x_i,x_j)$ is the Euclidean distance between $x_i$ and $x_j$.
Intuitively, a hand-written digit is regarded as strange
if it is closer to a digit labeled in a different way
than to digits labeled in the same way.
See, e.g., \cite{Vovk/etal:2005book,Bala/etal:2014}
for numerous other examples of nonconformity measures.

In our terminology we will follow \cite{Vovk/Wang:2020}.
A \emph{p-variable for testing randomness} in $Z^N$, where $Z$ is a measurable space,
is a measurable function $f:Z^N\to[0,1]$ such that,
for any power probability measure $P$ on $Z^N$
(i.e., for any measure $P$ of the form $Q^N$, where $Q$ is a probability measure on $Z$)
and any $\epsilon>0$,
\begin{equation}\label{eq:p-variable}
  P
  \left(
    \{(z_1,\dots,z_N)\st f(z_1,\dots,z_N)\le\epsilon\}
  \right)
  \le
  \epsilon.
\end{equation}
The value taken by $f$ on the realized sample is then a bona fide p-value
(perhaps conservative) for testing the hypothesis of randomness.
Similarly, a \emph{p-variable for testing exchangeability} in $Z^N$ is a measurable function $f:Z^N\to[0,1]$ such that,
for any exchangeable probability measure $P$ on $Z^N$
(i.e., for any measure that is invariant with respect to permutations of observations)
and any $\epsilon>0$, we have \eqref{eq:p-variable}.
The values taken by such $f$ are p-values for testing the hypothesis of exchangeability.

It is clear that every p-variable for testing exchangeability
is a p-variable for testing randomness.
Nonparametric statistics provides us with plenty of p-variables for testing exchangeability in $\R^N$
(under the rubric ``testing of randomness'', even though they in fact test the weaker assumption of exchangeability;
see, e.g., \cite{Wald/Wolfowitz:1943,Lehmann:1975,Bartels:1982}).
The following proposition shows how such a function $f$,
in combination with a nonconformity measure $A$,
generates a p-variable for testing exchangeability in $\mathbf{Z}^N$.
Set
\[
  (f\circ A)(z_1,\dots,z_N)
  :=
  f(A(z_1,\dots,z_N)).
\]

\begin{proposition}\label{prop:batch}
  If $f$ is a p-variable for testing exchangeability in $\R^N$ and $A$ is a nonconformity measure,
  then $f\circ A$ is a p-variable for testing exchangeability in $\mathbf{Z}^N$.
\end{proposition}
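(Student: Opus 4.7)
The plan is to reduce the statement to the definition of a p-variable for testing exchangeability in $\R^N$ by a pushforward argument, the point being that the equivariance property \eqref{eq:equivariance} of a nonconformity measure converts exchangeability on $\mathbf{Z}^N$ into exchangeability on $\R^N$ via $A$.

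Fix an exchangeable probability measure $P$ on $\mathbf{Z}^N$ and let $Q := P \circ A^{-1}$ be the pushforward of $P$ under the (measurable) map $A : \mathbf{Z}^N \to \R^N$. For a permutation $\pi$ of $\{1,\dots,N\}$, write $\pi \cdot (u_1,\dots,u_N) := (u_{\pi(1)},\dots,u_{\pi(N)})$ for the induced action on $\mathbf{Z}^N$ and on $\R^N$. The first step is to verify that $Q$ is exchangeable on $\R^N$. Equivariance gives $A(\pi \cdot z) = \pi \cdot A(z)$, from which $\{z : A(z) \in \pi \cdot B\} = \{z : \pi^{-1}\cdot z \in A^{-1}(B)\}$ for any measurable $B \subseteq \R^N$. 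Applying exchangeability of $P$ to the latter set yields $Q(\pi \cdot B) = Q(B)$, so $Q$ is indeed exchangeable.

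The second step is immediate: since $f$ is a p-variable for testing exchangeability in $\R^N$ and $Q$ is exchangeable on $\R^N$, for any $\epsilon > 0$,
\[
  P\bigl(\{z \st (f\circ A)(z) \le \epsilon\}\bigr)
  = P\bigl(A^{-1}(\{r \st f(r) \le \epsilon\})\bigr)
  = Q\bigl(\{r \st f(r) \le \epsilon\}\bigr)
  \le \epsilon.
\]
Since $P$ was an arbitrary exchangeable measure on $\mathbf{Z}^N$, this is the required p-variable property for $f\circ A$.

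The only mildly delicate step is the verification that the pushforward $Q$ is exchangeable, which is where equivariance is used in an essential way; if $A$ were merely measurable (not equivariant), the argument would fail. Everything else is bookkeeping with measurable sets and the definitions.
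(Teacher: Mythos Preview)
Your proof is correct and follows essentially the same approach as the paper: both arguments show that the pushforward $PA^{-1}$ of an exchangeable measure is exchangeable via the equivariance of $A$, and then apply the defining inequality of a p-variable for $f$ on $\R^N$. The only cosmetic difference is that the paper writes out the chain of equalities for $PA^{-1}(\pi\cdot E)=PA^{-1}(E)$ explicitly, whereas you package the same computation in the identity $A(\pi\cdot z)=\pi\cdot A(z)$.
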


\begin{proof}
  This follows immediately from the equivariance property \eqref{eq:equivariance}:
  if $P$ is an exchangeable probability measure on $\mathbf{Z}^N$,
  then its pushforward $PA^{-1}$ is an exchangeable probability measure on $\R^N$,
  and so
  \[
    P(f\circ A \le \epsilon)
    =
    (PA^{-1})(f \le \epsilon)
    \le
    \epsilon.
  \]
  To check that the pushforward $PA^{-1}$ of an exchangeable probability measure $P$ on $\mathbf{Z}^N$
  is indeed exchangeable,
  it suffices to notice that,
  for any permutation $\pi:\{1,\dots,N\}\to\{1,\dots,N\}$
  and any event $E\subseteq\mathbf{Z}^N$,
  \begin{align*}
    &PA^{-1}
    \left(\left\{
      (u_1,\dots,u_N) \st \left(u_{\pi(1)},\dots,u_{\pi(N)}\right) \in E
    \right\}\right)\\
    &=
    P
    \left(\left\{
      (z_1,\dots,z_N)
      \st
      A(z_1,\dots,z_N)
      \in
      \left\{(u_1,\dots,u_N) \st \left(u_{\pi(1)},\dots,u_{\pi(N)}\right) \in E\right\}
    \right\}\right)\\
    &=
    P
    \left(\left\{
      (z_1,\dots,z_N)
      \st
      A\left(z_{\pi(1)},\dots,z_{\pi(N)}\right)
      \in
      E
    \right\}\right)\\
    &=
    P
    \left(\left\{
      \left(z_{\pi(1)},\dots,z_{\pi(N)}\right)
      \st
      A\left(z_{\pi(1)},\dots,z_{\pi(N)}\right)
      \in
      E
    \right\}\right)
    =
    PA^{-1}(E).
  \end{align*}
  (The second equality follows from the equivariance of $A$ and the third from the exchangeability of $P$.)
\end{proof}

\begin{example}\label{ex:counterexample}
  Let us check that Proposition~\ref{prop:batch} ceases to be true if ``exchangeability'' is replaced by ``randomness''.
  Suppose $\mathbf{Z}=[0,1]$,
  and define the nonconformity score $\alpha_i$ of $z_i$ in $(z_1,\dots,z_n)$ by
  \[
    \alpha_i
    :=
    \begin{cases}
      1 & \text{if $z_i\ge m_i$}\\
      0 & \text{otherwise},
    \end{cases}
  \]
  where $m_i$ is the median of the multiset $\{z_1,\dots,z_{i-1},z_{i+1},\dots,z_n\}$.
  Suppose $N$ is an even number (to simplify the notion of a median)
  and let $P:=U^N$, where $U$ is the uniform probability measure on $[0,1]$.
  Then the pushforward $PA^{-1}$ is concentrated on the subset of $\{0,1\}^N$
  containing equal numbers of 0s and 1s.
  (Roughly, this corresponds to half of the elements being above the median.
  Intuitively, $A$ transforms a sequence that looks IID
  to a sequence that does not look IID at all for a large $N$,
  since it contains equal numbers of 0s and 1s.)
  By Stirling's formula (see, e.g., \eqref{eq:Stirling} below),
  the probability of this subset is at most
  \begin{equation}\label{eq:half}
    \binom{N}{N/2}
    2^{-N}
    \sim
    \sqrt{2/\pi}
    N^{-1/2}
    <
    N^{-1/2}
  \end{equation}
  under any product measure.
  Therefore, assuming $N$ is large,
  the function $f$ taking value $N^{-1/2}$ on this subset and $1$ elsewhere on $\{0,1\}^N$
  is a p-variable for testing randomness while $f\circ A$ is not
  (indeed, $f\circ A$ will take value $N^{-1/2}$ almost surely under $P$,
  and then it is obvious that it cannot be a p-variable).
\end{example}

\begin{table}[bt]
  \caption{Some p-values for the USPS dataset and Bartels's ratio test}
  \begin{center}
    \begin{tabular}{l|cc}
        & Euclidean distance & Tangent distance \\
      \hline
      \rule{0pt}{2.5ex}p-value & $2.7\times10^{-11}$ & $7.5\times10^{-16}$
    \end{tabular}
  \end{center}
  \label{tab:batch}
\end{table}

Table~\ref{tab:batch} gives the p-values produced by Bartels's \cite{Bartels:1982} ratio test
applied to the nonconformity scores \eqref{eq:1-NN},
where $d$ is the Euclidean distance or the more sophisticated tangent distance
\cite{Simard/etal:1993,Keysers:2000}, as indicated.
The p-values are very low, especially for the tangent distance.

\begin{remark}
  It is important to keep in mind that the standard implementations of the tangent distance
  are not always symmetric and $d(x,x')\ne d(x',x)$ is possible
  (in particular, this is the case for Keysers's \cite{Keysers:2000} implementation used in this paper).
  Whereas \eqref{eq:1-NN} itself defines a nonconformity measure regardless of the symmetry of $d$,
  efficient implementations of conformal testing of randomness based on \eqref{eq:1-NN}
  tend to rely on the symmetry of $d$ and lose their validity if $d$ is not symmetric.
  This is true for the implementation used for empirical studies
  in this paper;
  one possible solution (used here) is to replace $d(x_i,x_j)$ in \eqref{eq:1-NN}
  by the arithmetic mean of $d(x_i,x_j)$ and $d(x_j,x_i)$
  (using the geometric mean produces similar results).
\end{remark}

\section{Conformal martingales}
\label{sec:martingales}

First let me give some basic definitions of conformal prediction
(see, e.g., \cite{Vovk/etal:2005book} or \cite{Bala/etal:2014} for further details).
Let us fix a nonconformity measure $A$.
If $Z$ is a set, $Z^*$ is the set of all finite sequences of elements of $Z$;
if $Z$ is a measurable space, $Z^*$ is also regarded as a measurable space.
The \emph{p-value} $p_n$ generated by $A$
after being fed with a sequence of observations $(z_1,\dots,z_n)\in\mathbf{Z}^*$
is
\begin{equation}\label{eq:p}
  p_n
  =
  p_n(z_1,\dots,z_n,\theta_n)
  :=
  \frac
  {
    \left|
      \left\{
        i \st \alpha_i>\alpha_n
      \right\}
    \right|
    +
    \theta_n
    \left|
      \left\{
        i \st \alpha_i=\alpha_n
      \right\}
    \right|
  }
  {n},
\end{equation}
where $i$ ranges over $\{1,\dots,n\}$,
$\alpha_1,\dots,\alpha_n$ are the nonconformity scores for $z_1,\dots,z_n$,
\[
  (\alpha_1,\dots,\alpha_n)
  =
  A(z_1,\dots,z_n),
\]
and $\theta_n$ is a random number distributed uniformly on the interval $[0,1]$.
The following proposition gives the standard property of validity for conformal prediction
(for a proof, see, e.g., \cite[Proposition~2.8]{Vovk/etal:2005book}).

\begin{proposition}\label{prop:validity}
  Suppose the observations $z_1,z_2,\dots$ are IID,
  $\theta_1,\theta_2,\dots$ are IID and distributed uniformly on $[0,1]$,
  and the sequences $z_1,z_2,\dots$ and $\theta_1,\theta_2,\dots$ are independent.
  Then the p-values $p_1,p_2,\dots$ as defined in \eqref{eq:p}
  are IID and distributed uniformly on $[0,1]$.
\end{proposition}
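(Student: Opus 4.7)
The plan is to establish the result by a backward-filtration argument exploiting conditional exchangeability. For each $n\ge 1$, let $\mathcal{G}_n$ denote the $\sigma$-algebra generated by the unordered bag $\{\!\{z_1,\dots,z_n\}\!\}$ together with all later observations $z_{n+1},z_{n+2},\dots$ and all later random numbers $\theta_{n+1},\theta_{n+2},\dots$. Since $\{\!\{z_1,\dots,z_{n+1}\}\!\}$ is obtained from $\{\!\{z_1,\dots,z_n\}\!\}$ by adjoining $z_{n+1}$, the family $(\mathcal{G}_n)$ is decreasing in $n$.

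First I would verify that, conditional on $\mathcal{G}_n$, the tuple $(z_1,\dots,z_n)$ is uniformly distributed over all its orderings. This is the standard exchangeability fact for IID samples: given its bag, the ordering of $(z_1,\dots,z_n)$ is uniform, and this remains true after further conditioning on $(z_{n+1},z_{n+2},\dots)$, which are independent of $(z_1,\dots,z_n)$ under the power measure; the $\theta_i$'s are independent of everything else and play no role here.

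Next, by the equivariance property \eqref{eq:equivariance}, permuting the arguments of $A$ permutes its outputs, so the multiset $\{\!\{\alpha_1,\dots,\alpha_n\}\!\}$ is $\mathcal{G}_n$-measurable while the positional index of $\alpha_n$ inside that multiset is uniformly distributed. Combining this with the independent uniform tiebreaker $\theta_n$, a direct computation from \eqref{eq:p} shows that $p_n\mid\mathcal{G}_n$ is uniform on $[0,1]$: a block of $m$ equal scores occupying ranks $k,\dots,k+m-1$ contributes numerator $(k-1)+m\theta_n$, uniform on $[k-1,k+m-1]$, and averaging over blocks with weights proportional to their sizes yields a uniform law on $[0,1]$.

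Finally, I would observe that for every $k\ge 1$ the value of $p_{n+k}$ is invariant under permutations of $(z_1,\dots,z_n)$: another application of \eqref{eq:equivariance}, this time to a permutation of $\{1,\dots,n+k\}$ that fixes $n+1,\dots,n+k$ pointwise, shows that both the multiset of scores entering \eqref{eq:p} at step $n+k$ and the score $\alpha_{n+k}$ itself depend on the first $n$ observations only through their bag. Hence $p_{n+1},p_{n+2},\dots$ are all $\mathcal{G}_n$-measurable, so $p_n$ is independent of the entire future sequence $(p_{n+1},p_{n+2},\dots)$ and uniform on $[0,1]$. Iterating over $n$ delivers joint independence and hence the IID-uniform conclusion. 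The main technical obstacle will be the careful bookkeeping of ties in \eqref{eq:p} when verifying the conditional uniform distribution of $p_n$; the exchangeability and independence steps are routine once the reverse filtration is set up.
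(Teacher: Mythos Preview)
Your argument is correct and is essentially the standard backward-filtration proof; the paper does not give its own proof but refers to \cite[Proposition~2.8]{Vovk/etal:2005book}, and the argument there proceeds exactly along the lines you outline (conditioning on the bag of the first $n$ observations together with the future, and using equivariance to see that $p_n$ is conditionally uniform while $p_{n+1},p_{n+2},\dots$ are measurable with respect to that conditioning). Your handling of ties via the block decomposition is the right computation.
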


\begin{remark}\label{rem:validity}
  On a few occasions,
  we will also need a version of Proposition~\ref{prop:validity}
  for a finite horizon $N\in\N$.
  Now we have finite input sequences $z_1,\dots,z_N$ and $\theta_1,\dots,\theta_N$
  and, correspondingly, a finite output sequence $p_1,\dots,p_N$.
  The conclusion of Proposition~\ref{prop:validity} will still hold
  even if we relax the assumption of $z_1,\dots,z_N$ being IID
  to the assumption that they are exchangeable.
  (See \cite[Theorem~8.2]{Vovk/etal:2005book}.)
\end{remark}

The formal definition of a nonnegative conformal martingale
(equivalent to the definition given in \cite[Section~7.1]{Vovk/etal:2005book})
given in this paragraph will be followed by a discussion of the intuition behind it
in the following paragraph
(in our informal discussions we will often abbreviate
``nonnegative conformal martingale'' to ``conformal martingale'').
A \emph{betting martingale} is a measurable function $F:[0,1]^*\to[0,\infty]$
such that, for each sequence $(u_1,\dots,u_{n-1})\in[0,1]^{n-1}$, $n\ge1$, we have
\begin{equation}\label{eq:betting-martingale}
  \int_0^1
  F(u_1,\dots,u_{n-1},u)
  \dd u
  =
  F(u_1,\dots,u_{n-1});
\end{equation}
notice that betting martingales are required to be nonnegative.
A \emph{nonnegative conformal martingale} is any sequence of functions
$S_n:(\mathbf{Z}\times[0,1])^{\infty}\to[0,\infty]$, $n=0,1,\dots$,
such that, for some nonconformity measure $A$ and betting martingale $F$,
for all $m\in\{0,1,\dots\}$, $(z_1,z_2,\dots)\in\mathbf{Z}^{\infty}$,
and $(\theta_1,\theta_2,\dots)\in[0,1]^{\infty}$,
\begin{equation}\label{eq:S_n}
  S_m(z_1,\theta_1,z_2,\theta_2,\dots)
  =
  F(p_1,\dots,p_m),
\end{equation}
where $p_n$, $n\in\N$, is the p-value computed by~\eqref{eq:p}
from the nonconformity measure $A$,
the observations $z_1,z_2,\dots$,
and the $n$th element $\theta_n$ of the sequence $(\theta_1,\theta_2,\dots)$.
Notice that $S_m(z_1,\theta_1,z_2,\theta_2,\dots)$ depends on $z_1,\theta_1,z_2,\theta_2,\dots$
only via $z_1,\theta_1,\dots,z_m,\theta_m$.

Intuitively, a betting martingale describes the evolution of the capital
of a player who gambles against the hypothesis that the p-values $p_1,p_2,\dots$
are distributed uniformly and independently,
as they should under the hypothesis of randomness
(see Proposition~\ref{prop:validity}).
The requirement \eqref{eq:betting-martingale} expresses the fairness of the game:
at step $n-1$, the conditional expected value of the player's future capital at step $n$
given the present situation (i.e., the first $n-1$ p-values) is equal to his current capital.
This formalization of fair betting goes back to Ville \cite{Ville:1939}
and was made very popular in probability theory by Doob \cite{Doob:1953};
for a recent review of developments in various directions, see \cite{Shafer/Vovk:2019}.
A conformal martingale is what we get when we feed a betting martingale
with p-values \eqref{eq:p} produced by conformal prediction.

One way of constructing betting martingales is to use ``betting functions'',
in the terminology of \cite{\OCMIV}.
A \emph{betting function} $f:[0,1]\to[0,\infty]$ is a function satisfying $\int_0^1 f(u)\dd u=1$.
A useful method of betting against the hypothesis that the p-values $p_1,p_2,\dots$
are independent and uniformly distributed is to choose,
before each step $n$, a betting function $f_n$ that may depend on $p_1,\dots,p_{n-1}$
(in a measurable manner).
Then
\begin{equation}\label{eq:bet}
  F(p_1,\dots,p_n):=f_1(p_1)\dots f_n(p_n),
  \quad
  n=0,1,\dots,
\end{equation}
will be a betting martingale
(a conformal martingale if $p_1,p_2,\dots$ are generated by conformal prediction, \eqref{eq:p}).

\begin{remark}\label{rem:tricky}
  Conformal martingales are \emph{exchangeability martingales},
  i.e., stochastic processes that are martingales with respect to any exchangeable distribution.
  The existence of non-trivial exchangeability martingales is, however, not obvious.
  It is easy to check that for the natural underlying filtration $(\FFF_n)_{n=0,1,\dots}$
  generated by the observations $z_1,z_2,\dots$ the only exchangeability martingales
  are almost sure constants.
  There are two reasons why non-trivial exchangeability martingales exist:
  \begin{itemize}
  \item
    Our underlying filtration is poorer than $\FFF_n$.
    A conformal martingale $S$ satisfies
    \[
      \Expect(S_n \mid S_1,\dots,S_{n-1})
      =
      S_{n-1},
      \quad
      n\in\N,
    \]
    i.e., it is a martingale in its own filtration.
    Moreover, it is a martingale in the filtration $(\GGG_n)_{n=0,1,\dots}$
    where $\GGG_n$ is generated by the first $n$ p-values $p_1,\dots,p_n$.
  \item
    Conformal martingales are randomized:
    they also depend on the random numbers $\theta_1,\theta_2,\dots$.
  \end{itemize}
  The first reason alone seems to be insufficient for getting really useful exchangeability martingales:
  e.g., in the binary case $\mathbf{Z}=\{0,1\}$,
  the observations $z_1,\dots,z_n$ are determined by the values $S_0,S_1,\dots,S_n$,
  unless $S_{i}=S_{i-1}$ for some $i\in\{1,\dots,n\}$
  (let us check this for $n=1$:
  depending on the value of $z_1$, we have either $S_1(z_1,\dots)>S_0$ or $S_1(z_1,\dots)<S_0$,
  and knowing which inequality is true determines $z_1$;
  for general $n$, use induction in $n$).
  In many practically interesting cases there is not much randomness in conformal martingales;
  it is only used for tie-breaking.
  However, even a tiny amount of randomness can be conceptually important
  (other fields where this phenomenon has been observed are differential privacy
  and defensive forecasting \cite[Section~12.7]{Shafer/Vovk:2019}).
\end{remark}

\begin{remark}\label{rem:de-Finetti}
  Notice that exchangeability martingales
  discussed in Remark~\ref{rem:tricky} can be equivalently defined
  as stochastic processes that are martingales with respect to any power distribution,
  assuming that the observation space $\mathbf{Z}$ is a Borel space
  (this is a very weak requirement; see, e.g., \cite[B.3.2]{Schervish:1995}).
  Indeed, according to de Finetti's theorem
  (see, e.g., \cite[Theorem~1.49]{Schervish:1995})
  every exchangeable distribution is then a Bayesian mixture of power distributions,
  and so being a martingale with respect to all power distributions
  and with respect to all exchangeable distributions
  are equivalent.
\end{remark}

\subsection*{Using conformal martingales for testing randomness}

We only consider nonnegative conformal martingales $S$ with $S_0\in(0,\infty)$.
Let us see how such martingales can be used for testing randomness.

A possible goal is to raise an alarm warning the user about lack of randomness as soon as possible.
Ville's inequality \cite[Chapter 7, Section 3, Theorem 1.III]{Shiryaev:2019}
says that, for any $c>1$,
\[
  \Prob(\exists n: S_n/S_0\ge c) \le 1/c
\]
under any power distribution.
This means that if we raise an alarm when $S_n/S_0$ reaches threshold $c$,
we will be wrong with probability at most $1/c$.
This is a strong (in some situation too strong) requirement of validity,
and we will sometimes refer to it as \emph{strong validity}.

We can also interpret $S_n/S_0$ directly as the amount of evidence
detected against the first $n$ observations being IID.
In principle, there is no need to raise an alarm explicitly,
and we can leave the decision whether to abandon the assumption of randomness
with the user of our methods.

As an example, for the USPS dataset of handwritten digits (9298 in total),
the online performance of a nonnegative conformal martingale
based on the nonconformity measure \eqref{eq:1-NN} (with Euclidean distance)
is shown in the left panel of Fig.~\ref{fig:USPS}
(which is Fig.~7.6 in \cite{Vovk/etal:2005book},
where full details of the conformal martingale can be found).
We already know from Section~\ref{sec:batch} that the USPS dataset is not random,
and its lack of randomness is detected by this conformal martingale in the online mode.
The advantage of the online mode is that such a conformal martingale may be used in practice for deciding
when a digit classifier needs to be retrained
(we can see that approximately after the 2400th observation it would be definitely desirable).

\begin{figure}[t]
  \begin{center}
    \includegraphics[width=0.49\linewidth]{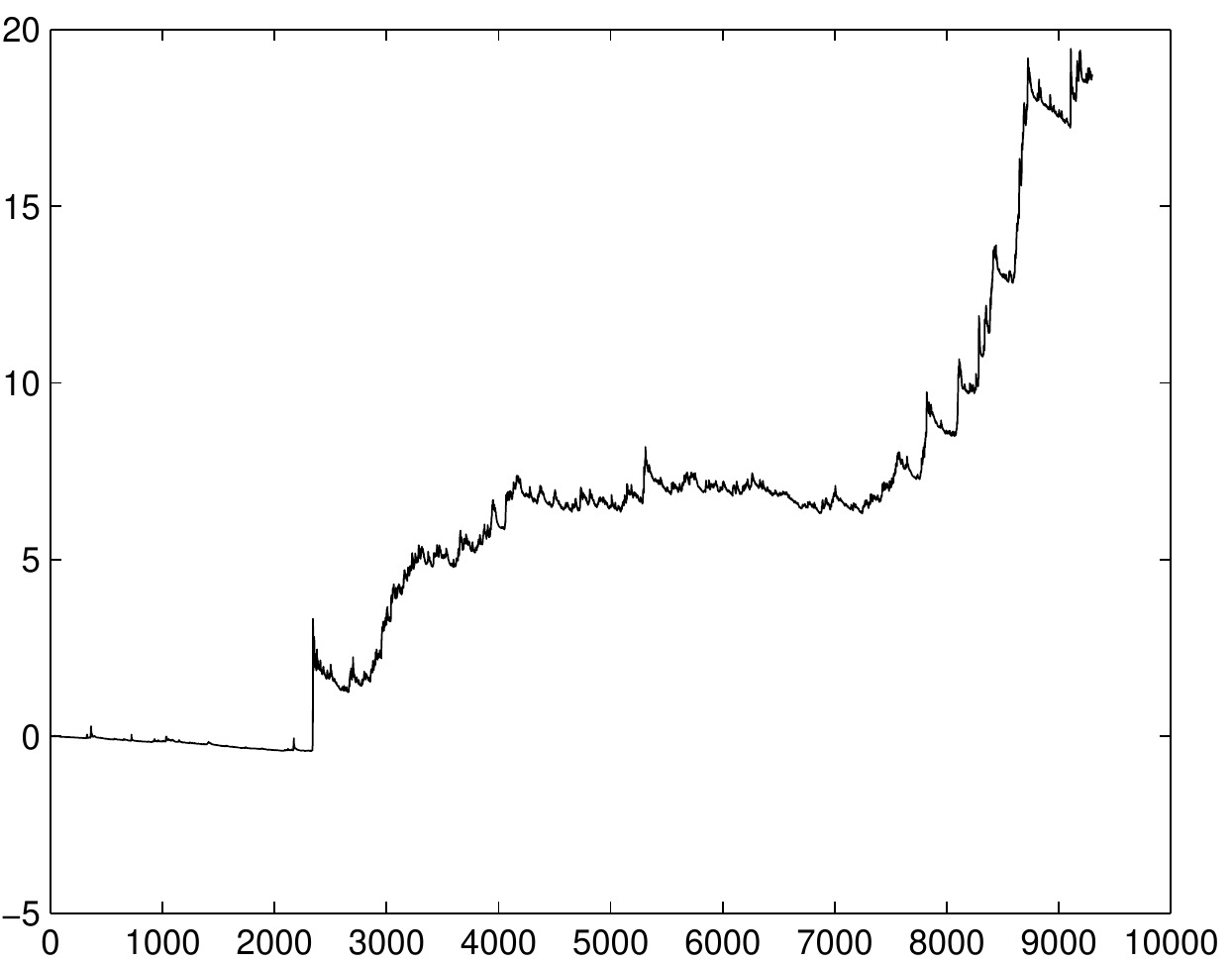}
    \includegraphics[width=0.49\linewidth]{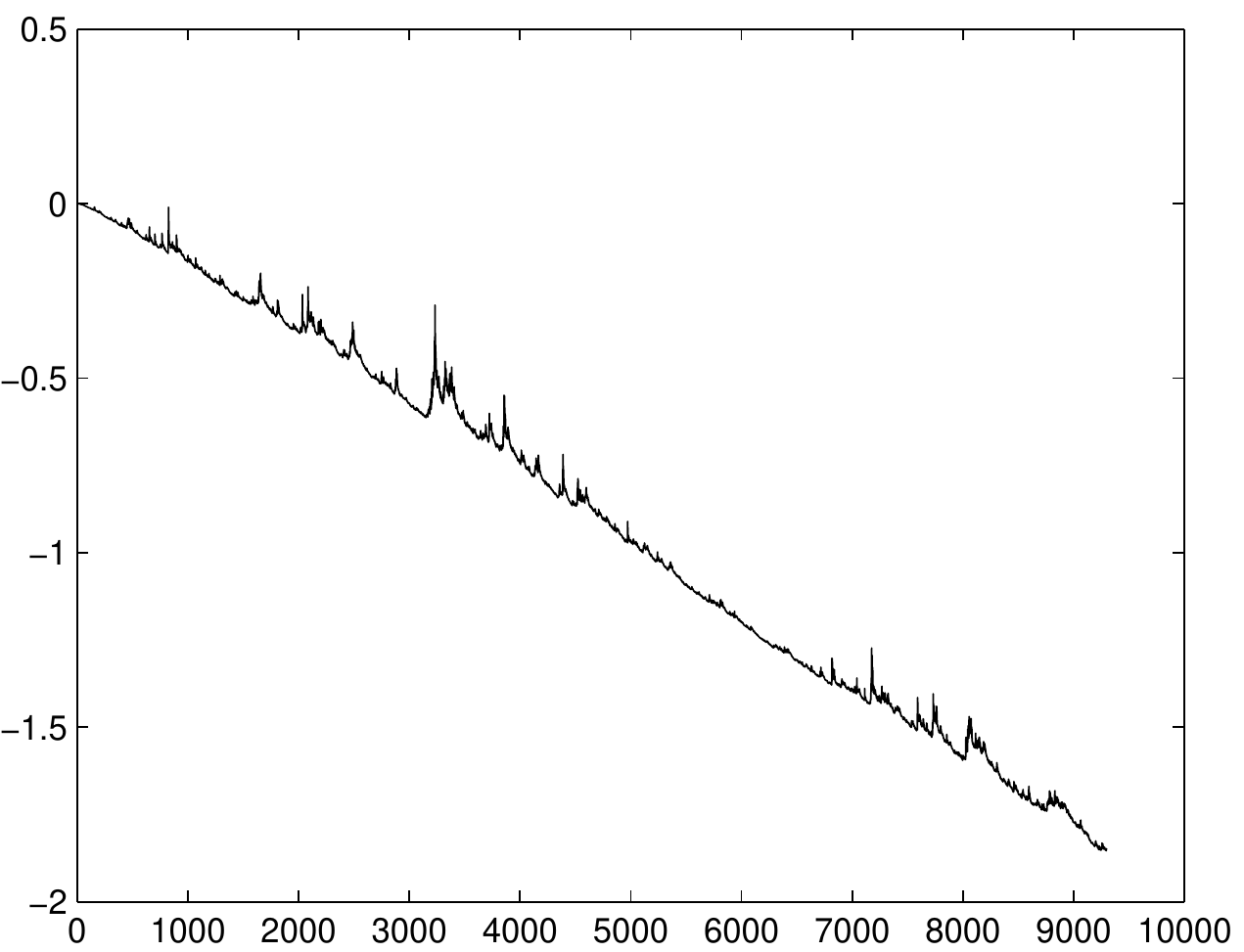}
  \end{center}
  \caption{Left panel:
    The values $S_n$ of a nonnegative conformal martingale
    after observing the first $n$ digits, $n=0,\dots,9298$, of the USPS dataset,
    with the log-10 scale for the vertical axis.
    The initial value $S_0$ is 1, and the final value $S_{9298}$ is $4.71\times10^{18}$.
    Right panel:
    The values $S_n$ of the same nonnegative conformal martingale as in the left panel
    after observing the first $n$ digits of a randomly permuted USPS dataset,
    with the log-10 scale for the vertical axis.
    The initial value $S_0$ is 1, and the final value $S_{9298}$ is 0.0142.}
  \label{fig:USPS}
\end{figure}

The conformal martingale whose performance is shown in Fig.~\ref{fig:USPS}
is fairly complicated, but the ideas behind its construction are instructive.
Any function
\begin{equation}\label{eq:betting-function}
  f^{(\kappa)}(u)
  :=
  \kappa u^{1-\kappa},
  \quad
  u\in[0,1],
  \quad
  \kappa\in(0,1),
\end{equation}
is a betting function, in the sense of satisfying $\int_0 f^{(\kappa)}(u) \dd u = 1$.
This implies that
\[
  F^{(\kappa)}(u_1,\dots,u_n)
  :=
  \prod_{i=1}^n
  f^{(\kappa)}(u_i)
\]
is a betting martingale (satisfies~\eqref{eq:betting-martingale})
and so determines, by \eqref{eq:S_n},
a conformal martingale $S=S^{(\kappa)}$.
To get rid of the dependence on $\kappa$,
we can use the conformal martingale $\int_0^1 S^{(\kappa)} \dd\kappa$,
which was called the \emph{simple mixture} in \cite[Section~7.1]{Vovk/etal:2005book}.
The simple mixture starts from 1 and its final value is $2.18\times10^{10}$.

The simple mixture attains an astronomical final value,
but it can be improved further,
in some sense tracking the best value of $\kappa$
(following Herbster and Warmuth's \cite{Herbster/Warmuth:1998ML} idea
of tracking the best expert).
For a stochastic process producing a random sequence $\kappa_1,\kappa_2,\dots$,
we can integrate
\[
  F^{(\kappa_1,\kappa_2,\dots)}(u_1,\dots,u_n)
  :=
  \prod_{i=1}^n
  f^{(\kappa_i)}(u_i)
\]
with respect to the distribution of $\kappa_1,\kappa_2,\dots$,
and for a reasonable choice of the stochastic process,
we can improve the final value of the conformal martingale.
It can be further boosted by allowing the stochastic process to ``sleep'' at some steps
(so that the corresponding conformal martingale does not gamble on those steps).
In statistical literature,
these ideas are discussed in \cite{Erven/etal:2012}.

The idea behind the betting functions \eqref{eq:betting-function}
is that the lack of randomness will show itself in abnormally low p-values.
Fedorova and Nouretdinov \cite{\OCMIV} came up with an unexpected new idea:
in fact, we can gamble against \emph{any} non-uniformity in the distribution of the p-values,
and this may be a very successful strategy for detecting non-randomness.

Suppose we know the true distribution of the $n$th p-value $p_n$
(conditional on knowing the first $n-1$ p-values),
and suppose it is continuous with density $\rho$.
What betting function $f$ should we choose?
This is a continuous version of the standard problem of horse race betting
\cite{Kelly:1956,Cover/Thomas:2006}.
The following simple lemma sheds some light on ways of exploiting such non-uniformity.

\begin{lemma}
  For any probability density functions $\rho$ and $f$ on $[0,1]$
  (so that $\int_0^1 \rho(p) \dd p = 1$ and $\int_0^1 f(p) \dd p = 1$),
  \begin{align}
    \int_0^1
    \Bigl(
      \log\rho(p)
    \Bigr)
    \rho(p) \dd p
    &\ge
    \int_0^1
    \Bigl(
      \log f(p)
    \Bigr)
    \rho(p) \dd p
    \label{eq:KL-1}\\
    \intertext{and}
    \int_0^1
    \Bigl(
      \log\rho(p)
    \Bigr)
    \rho(p) \dd p
    &\ge
    0.
    \label{eq:KL-2}
  \end{align}
\end{lemma}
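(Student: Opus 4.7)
The plan is to recognize (\ref{eq:KL-1}) as the statement that the Kullback--Leibler divergence $\int \rho \log(\rho/f)$ is nonnegative (Gibbs' inequality), and then to derive (\ref{eq:KL-2}) as the immediate special case in which $f$ is the uniform density on $[0,1]$, i.e.\ $f \equiv 1$; then $\log f \equiv 0$ and the right-hand side of (\ref{eq:KL-1}) vanishes, giving (\ref{eq:KL-2}).

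For the main inequality (\ref{eq:KL-1}), I would move everything to one side, rewriting the claim as
\[
  \int_0^1 \rho(p) \log \frac{f(p)}{\rho(p)} \, \mathrm{d} p
  \le
  0,
\]
and then apply the elementary tangent-line bound $\log x \le x - 1$ (valid for all $x > 0$, and a direct consequence of the concavity of $\log$) pointwise with $x := f(p)/\rho(p)$ on the set $\{p : \rho(p) > 0\}$. This gives $\rho(p) \log(f(p)/\rho(p)) \le f(p) - \rho(p)$ there, and integrating together with $\int_0^1 f = \int_0^1 \rho = 1$ yields the required bound.

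The only mild obstacle is the bookkeeping on the exceptional sets where $\rho$ or $f$ vanishes. With the standard conventions $0 \log 0 := 0$ and $\log 0 := -\infty$, the integrand on $\{\rho = 0\}$ contributes nothing to the left-hand side of (\ref{eq:KL-1}); and at points where $\rho > 0$ but $f = 0$ the integrand equals $-\infty$, so the left-hand side is $-\infty$ and the inequality holds trivially (unless $\rho = 0$ a.e.\ on $\{f = 0\}$, in which case the pointwise argument above goes through unchanged by restricting the domain of integration to $\{f > 0\}$). Beyond these measure-theoretic trivialities, the entire proof reduces to one application of $\log x \le x - 1$.
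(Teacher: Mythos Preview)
Your proof is correct and follows essentially the same route as the paper: rewrite \eqref{eq:KL-1} as nonnegativity of the Kullback--Leibler divergence, prove that via the elementary bound $\log x \le x - 1$, and then obtain \eqref{eq:KL-2} by specializing to the uniform density $f \equiv 1$. Your treatment is in fact slightly more careful than the paper's, since you spell out the handling of the sets $\{\rho = 0\}$ and $\{f = 0\}$.
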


\begin{proof}
  It is well known
  (and immediately follows from the inequality $\log x\le x-1$)
  that the Kullback--Leibler divergence is always non-negative:
  \[
    \int_0^1\log\Bigl(\frac{\rho(p)}{f(p)}\Bigr)\rho(p) \dd p
    \ge
    0.
  \]
  This is equivalent to \eqref{eq:KL-1}.
  And \eqref{eq:KL-2} is a special case of \eqref{eq:KL-1}
  corresponding to the probability density function $f:=1$.
\end{proof}

If we choose a betting function $f$,
the log of our capital will increase by the right-hand side of \eqref{eq:KL-1}
in expectation.
Therefore, according to \eqref{eq:KL-1},
the largest increase in expectation is achieved
when we use $\rho$ as the betting function.
(Increasing the log capital as much as possible in expectation
is a natural objective since such increases add
to give the log of the final capital,
and so we can apply the law of large numbers,
as in horse racing \cite[Section 6.1]{Cover/Thomas:2006}
or log-optimal portfolios \cite[Chapter 16]{Cover/Thomas:2006}.)
The discrete version of this strategy
is known as \emph{Kelly gambling} \cite[Theorem 6.1.2]{Cover/Thomas:2006}.

How efficient the betting function $\rho$ is depends on the left-hand side of \eqref{eq:KL-1},
which is the minus (differential) entropy of $\rho$.
The maximum entropy distribution on $[0,1]$ is the uniform distribution,
as asserted by \eqref{eq:KL-2},
whose right-hand side is equal to the minus entropy of the uniform distribution.
(This is a very special case of standard maximum entropy results,
such as \cite[Theorem 12.1.1]{Cover/Thomas:2006}.)
The uniform true distribution for the p-values
gives zero expected increase in the log capital;
otherwise, it is positive.

\begin{figure}[t]
  \begin{center}
    \includegraphics[width=0.49\linewidth]{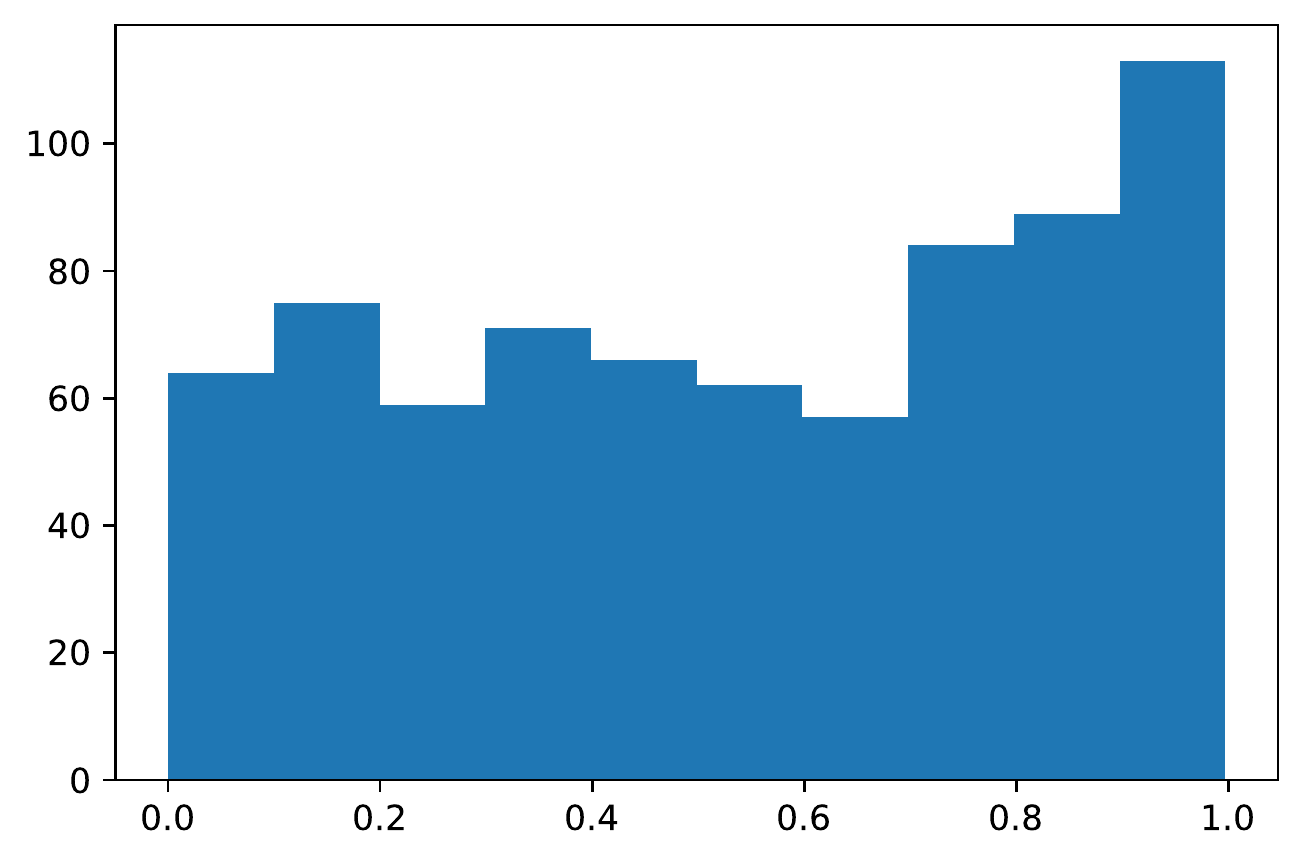}
    \includegraphics[width=0.49\linewidth]{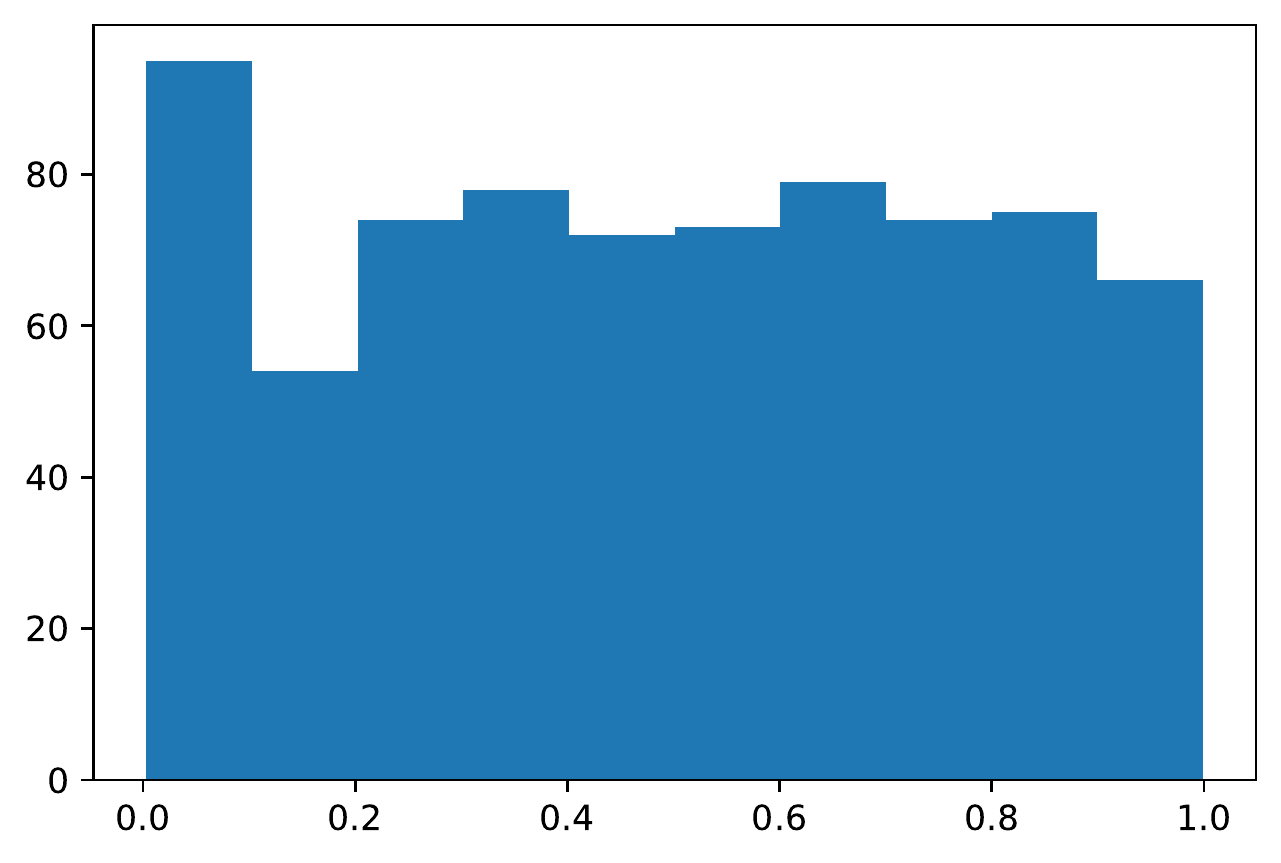}
  \end{center}
  \caption{Left panel:
    The histogram (with 10 bins) of the p-values at the last step
    for the nonnegative conformal martingale of Fig.~\ref{fig:Absenteeism}
    on the \texttt{Absenteeism} dataset.
    Right panel:
    The p-values of the same nonnegative conformal martingale
    on the randomly permuted \texttt{Absenteeism} dataset.}
  \label{fig:histograms}
\end{figure}

Let us apply these ideas to the \texttt{Absenteeism} dataset
briefly described in Section~\ref{sec:introduction}.
We will estimate the distribution of the past p-values using a histogram
and then will use the estimated distribution for betting
(therefore, we will implicitly assume the stability of the distribution of p-values).
The dataset consists of 740 observations (employees' absences) and a number of attributes;
it is given in the Clustering section of the repository,
and so there is no specified label.
Let us use four attributes,
Age, Disciplinary failure, Education, and Son (meaning the number of children);
the other attributes appeared to me more subjective
(such as Social drinker and Social smoker) or less relevant.
To apply the same nonconformity measure as before,
\eqref{eq:1-NN},
we need to nominate one of the attributes as label,
and ``Disciplinary failure'' appears to be particularly relevant
for studying the phenomenon of absenteeism.
To make the attributes comparable, let us divide Age by 50,
Education by 3 (this attribute ranges from 1, high school, to 4, master and doctor),
and Son by 4.
With this choice, the histogram of p-values at the last step is given
in the left panel of Fig.~\ref{fig:histograms},
and it is visibly non-uniform, namely tends to increase from let to right
(unlike the more stable right panel showing the p-values for a randomly permuted dataset;
cf.\ Remark~\ref{rem:validity}).

\begin{figure}[t]
  \begin{center}
    \includegraphics[width=0.49\linewidth]{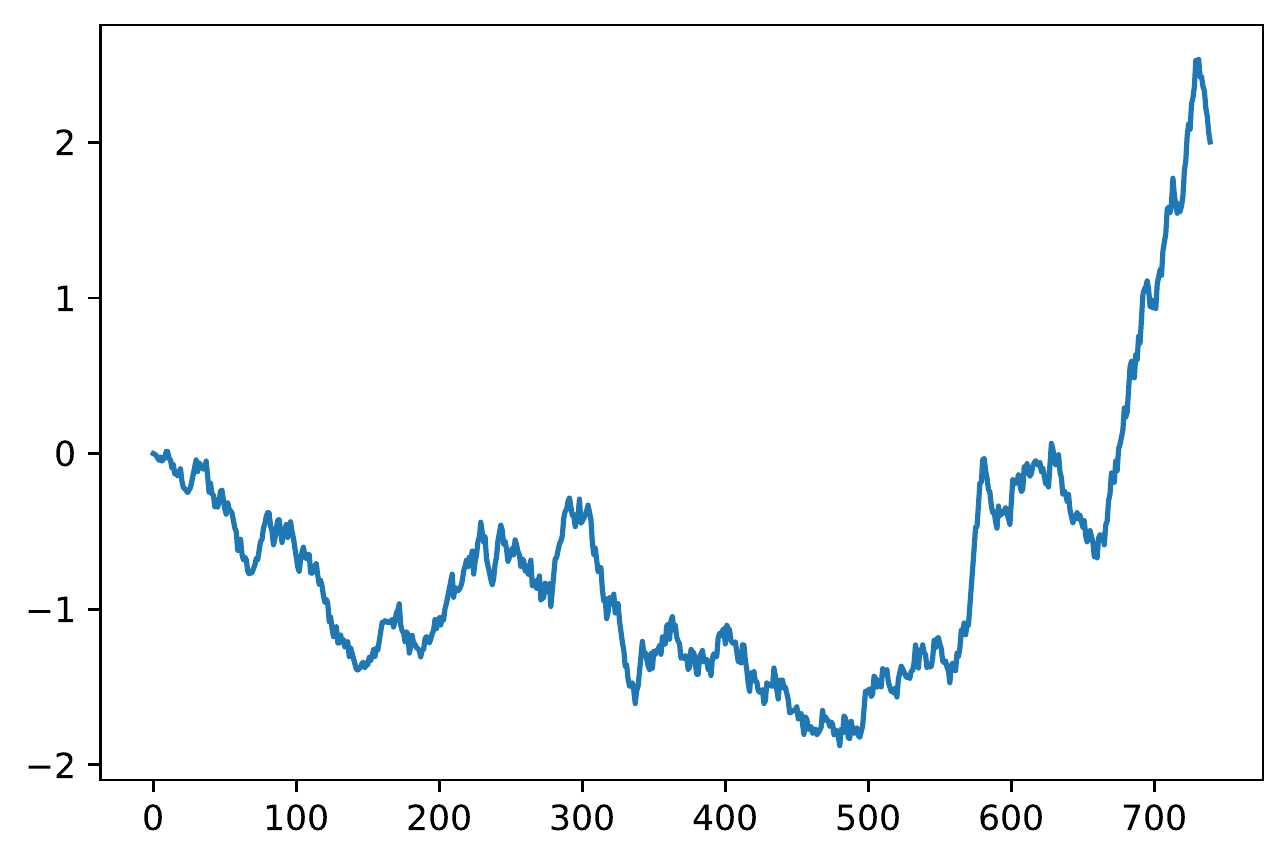}
    \includegraphics[width=0.49\linewidth]{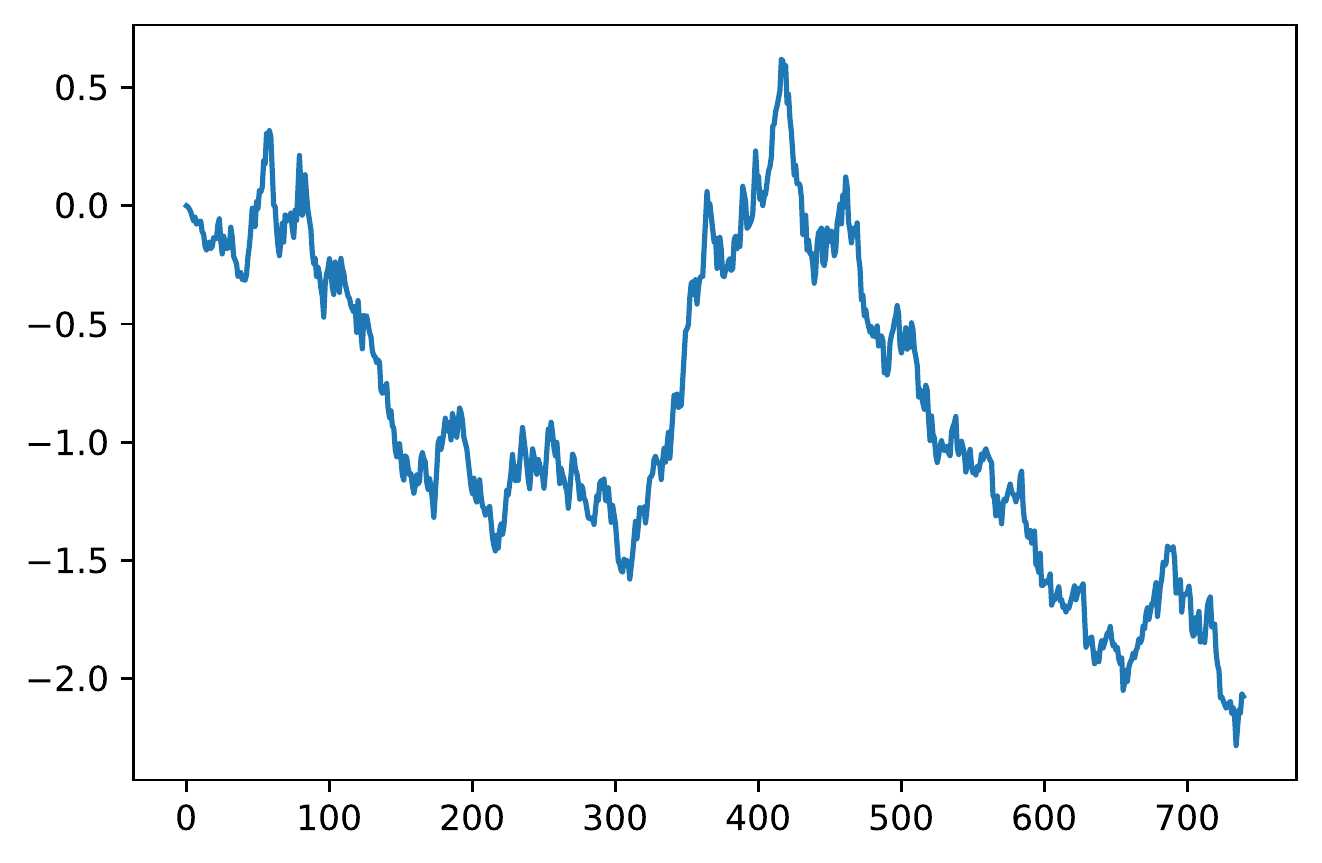}
  \end{center}
  \caption{Left panel:
    The values $S_n$ of a nonnegative conformal martingale
    after observing the first $n$ absences, $n=0,\dots,740$, in the \texttt{Absenteeism} dataset,
    with the log-10 scale for the vertical axis.
    The initial value $S_0$ is 1, and the final value $S_{740}$ is $100.50$.
    Right panel:
    The values $S_n$ of the same nonnegative conformal martingale as in the left panel
    after observing the first $n$ absences in a randomly permuted \texttt{Absenteeism} dataset,
    with the log-10 scale for the vertical axis.
    The initial value $S_0$ is 1, and the final value $S_{740}$ is
    $0.00841$.}
  \label{fig:Absenteeism}
\end{figure}

Figure~\ref{fig:Absenteeism} shows the results for a simple conformal martingale
exploiting the non-uniformity of the p-values.
We maintain $B$ bins corresponding to the subintervals $I_i:=[(i-1)/B,i/B]$ of $[0,1]$,
$i=1,\dots,B$ (we ignore the possibility of p-values landing on a boundary between two bins).
Initially, each bin contains $C$ p-values
(these dummy p-values are an element of regularization).
At step $n$, after computing the $n$ p-values for this step
(using the nonconformity measure \eqref{eq:1-NN}),
the algorithm puts them into the corresponding bins
and uses as its betting function $f_n$ the function equal to $(C+n_i)/(C+n/B)$ on the $i$th bin,
where $n_i$ is the number of p-values in that bin.
The mean of the betting function is 1 (and $C+n/B$ is the normalizing constant),
and it approximates Kelly gambling.
The conformal martingale whose performance is shown in Fig.~\ref{fig:Absenteeism}
is \eqref{eq:bet};
its initial value is 1.

The expectation of the martingale's final value is 1,
and we can use Jeffreys's \cite[Appendix~B]{Jeffreys:1961} rule of thumb for interpreting
the amount of evidence against the null hypothesis of randomness that it provides:
\begin{itemize}
\item
  A value below 1 supports the null hypothesis.
\item
  A value in the interval $(1,\sqrt{10})$ provides poor evidence against the null hypothesis
  (is not worth more than a bare mention).
\item
  A value in $(\sqrt{10},10)$ provides substantial evidence.
\item
  For a value in $(10,10^{3/2})$, the evidence is strong.
\item
  For a value in $(10^{3/2},100)$, the evidence is very strong.
\item
  Finally, for values above $100$ the evidence is decisive.
\end{itemize}
For example, the evidence against the hypothesis of randomness
provided by the left-hand panel of Fig.~\ref{fig:Absenteeism},
which uses $B=C=10$, is definitive
(although bordering on very strong)
since the final value of the conformal martingale is $100.50$.
Varying various parameters leads to similar, often stronger, results.
For example, replacing the ratio in~\eqref{eq:1-NN} by difference,
adding two extra attributes Social drinker and Social smoker,
and setting $B=C=20$,
leads to a final value of $3446.75$
greatly exceeding the threshold for definitive evidence.
However, all of these conformal martingales find deviations from randomness
only at the end of the period (starting from the 650th observation
at the earliest).

\subsection*{Optimality of sequential testing procedures}

The efficiency of our procedure for testing randomness
will be the topic of Section~\ref{sec:probability},
and in this section we will only discuss the nature of the problem.
Perhaps the most satisfactory results about the efficiency of sequential testing procedures
are optimality results such as that obtained by Wald and Wolfowitz \cite{Wald/Wolfowitz:1948}
for Wald's \cite{Wald:1945,Wald:1947} sequential probability ratio test.
The goal of establishing such optimality results for our procedures for testing randomness
would be, however, too ambitious.

Wald's sequential probability ratio test was designed by him in April 1943
\cite[Section B]{Wald:1945}
for the problem of testing a simple hypothesis against a simple alternative,
with IID observations.
Let $S_n$ be the likelihood ratio of the alternative hypothesis to the null hypothesis
after $n$ observations.
The test consists in fixing two positive constants $A$ and $B$ such that $A>B$
and stopping as soon as $S_n$ leaves the interval $(B,A)$.
If $S_n>A$ at that time $n$, we reject the null hypothesis;
otherwise, we accept it.

A sequential test can make errors of two kinds:
reject the null hypothesis when it is true
(error of the first kind)
or accept it when it is false
(error of the second kind).
Any sequential probability ratio test $T$ is efficient in a strong sense:
if another sequential test $T'$ has errors of the first and second kind
that are not worse than those for $T$,
the expected time of reaching a decision is as good for $T$ as it is for $T'$
(or better),
under both null and alternative hypotheses.
In other words, sequential probability ratio tests optimize
the number of observations needed to arrive at a decision,
under natural constraints.

Wald showed the efficiency of his test in the sections
``Efficiency of the Sequential Probability Ratio Test''
in \cite{Wald:1945,Wald:1947}
ignoring the possibility of $S_n$ overshooting $A$ or undershooting $B$.
In \cite{Wald/Wolfowitz:1948} he and Wolfowitz provided a full proof.

The strength of this result is made possible by the restricted nature of the testing problem.
Both null and alternative hypotheses are known probability distributions.
The test is specified by two numbers, $A$ and $B$.
The situation with testing randomness using conformal martingales
is very different.
A conformal martingale is determined by the underlying nonconformity measure,
which can even involve an element of intelligence.
See, e.g., \cite{Vovk/etal:2005book}, which defines numerous nonconformity measures
based on powerful algorithms of machine learning,
including neural networks.
We cannot expect to be able to \emph{prove} that such a procedure is successful
(as argued by philosophers in other contexts; see, e.g., \cite[Section 20]{Popper:1982}).
The task of designing a conformal martingale is too open-ended for that.

Our approach to establishing the efficiency of our strongly valid procedures
will not be based on optimality.
The idea is to show that our procedures do not constrain us:
whatever a procedure for testing randomness can achieve,
can be achieved with conformal martingales.
Notice that even the Wald--Wolfowitz result can be interpreted in this way.
However, our results in Section~\ref{sec:probability} will be much cruder.

\section{Multistage nonrandomness detection}
\label{sec:SR}

Our main concern in this section is application of conformal prediction to change detection,
which we already started discussing in Section~\ref{sec:introduction}.
A typical example of change detection is where we observe attacks,
which we assume to be IID, on a computer system.
When a new kind of attacks appears, the process of attacks ceases to be IID,
and we would like to raise an alarm soon afterwards.
The two benchmark datasets that we considered in the previous section
can also be used to illustrate the problem of change detection:
we may be interested in deciding when to retrain a predictor
and in detecting a change in the pattern of workforce absences.

There is vast literature on change detection;
see, e.g., \cite{Poor/Hadjiliadis:2009,Shiryaev:2019-disorder} for reviews.
However, the standard case is where the pre-change and post-change distributions are known,
and the only unknown is the time of change.
Generalizations of this picture usually stay fairly close to it
(see, e.g., \cite[Section 7.3]{Poor/Hadjiliadis:2009}).
Conformal change detection relaxes the standard assumptions radically.

As explained in Section~\ref{sec:introduction},
we may regard any problem of detecting nonrandomness in the online mode
as a problem of detecting a change point.
The latter includes as special case the situation where the assumption of randomness is never satisfied,
since 0 is an allowed change point.
Our informal goal is to raise an alarm as soon as possible after the hypothesis of randomness
ceases to be true.
In the previous section we did not insist on having an explicit rule for raising an alarm,
and simply regarded the value of a nonnegative conformal martingale starting from 1
as the amount of evidence found against the hypothesis of randomness,
but in this section it will be more convenient to couch our discussion
in terms of such rules.

As already mentioned,
the kind of guarantees provided by the policy of raising an alarm when $S_n/S_0\ge c$
is often regarded as too strong to be really useful.
This can be illustrated using the analogue of the left panel of Figure~\ref{fig:USPS}
for a randomly permuted USPS dataset.
The same conformal martingale performs as shown in the right panel of Figure~\ref{fig:USPS}
(this is Figure 7.8 in \cite{Vovk/etal:2005book}).
The conformal martingale is trying to gamble against an exchangeable sequence of observations,
which is futile,
and so its value decreases exponentially quickly.
If a change occurs at some point in the distant future,
it might take a long time for the martingale to recover its value.
This is a general phenomenon;
we must pay for giving ourselves the chance to detect lack of exchangeability
by losing capital in the situation of exchangeability.

Weaker guarantees are provided by multistage procedures originated, in a basic form,
by Shewhart in his control chart techniques \cite{Shewhart:1931}
and perfected by Page \cite{Page:1954} and Kolmogorov and Shiryaev \cite{Kolmogorov/Shiryaev:1960}
(see also the fascinating historical account in \cite[Section~1]{Shiryaev:2010}).

\subsection*{CUSUM-type change detection}

A standard multistage procedure of raising alarms is the CUSUM procedure
proposed by Page \cite{Page:1954}
(see also \cite[Section 6.2]{Poor/Hadjiliadis:2009}).
According to this procedure,
we raise the $k$th alarm at the time
\begin{equation}\label{eq:CUSUM}
  \tau_k
  :=
  \min
  \left\{
    n>\tau_{k-1}
    \st
    \max_{i=\tau_{k-1},\dots,n-1}
    \frac{S_n}{S_i}
    \ge
    c
  \right\},
  \quad
  k\in\N,
\end{equation}
where the threshold $c>1$ is a parameter of the algorithm,
$\tau_0:=0$, and $\min\emptyset:=\infty$.
If $\tau_k=\infty$ for some $k$, an alarm is raised only finitely often;
otherwise it is raised infinitely often.
The procedure is usually applied to the likelihood ratio process
between two power distributions,
but it can be applied to any positive martingale,
and in this paper we are interested in the case where $S$ is a conformal martingale,
which is now additionally assumed to be positive,
ensuring that the denominator in \eqref{eq:CUSUM} is always non-zero.
CUSUM is often interpreted as a repeated sequential probability ratio test
\cite[Section 4.2]{Page:1954}.
The \emph{conformal CUSUM procedure}
(i.e., CUSUM applied to a positive conformal martingale)
was introduced in \cite{Volkhonskiy/etal:2017COPA};
however, a basic and approximate version of this procedure
has been known since 1990: see \cite{McDonald:1990}.

Properties of validity for the conformal CUSUM procedure
will be obtained in this paper
as corollaries of the corresponding properties of validity
for the Shiryaev--Roberts procedure, which we consider next.

\subsection*{Shiryaev--Roberts change detection}

A popular alternative to the CUSUM procedure
is the Shiryaev--Roberts procedure \cite{Shiryaev:1963,Roberts:1966},
which modifies \eqref{eq:CUSUM} as follows:
\begin{equation}\label{eq:SR}
  \tau_k
  :=
  \min
  \left\{
    n>\tau_{k-1}
    \st
    \sum_{i=\tau_{k-1}}^{n-1}
    \frac{S_n}{S_i}
    \ge
    c
  \right\},
  \quad
  k\in\N
\end{equation}
(i.e., we just replace the $\max$ in \eqref{eq:CUSUM} by $\sum$).
We will again apply it to a conformal martingale $S$,
still assumed to be positive,
obtaining the \emph{conformal Shiryaev--Roberts procedure}.

The procedure defining $\tau_1$ is based on the statistics
\begin{equation}\label{eq:R}
  R_n
  :=
  \sum_{i=0}^{n-1}
  \frac{S_n}{S_i},
\end{equation}
which admit the recursive representation
\begin{equation}\label{eq:RR}
  R_n
  =
  \frac{S_n}{S_{n-1}}
  \left(
    R_{n-1} + 1
  \right),
  \quad
  n\in\N,
\end{equation}
with $R_0:=0$.
An interesting finance-theoretic interpretation of this representation
is that $R_n$ is the value at time $n$ of a portfolio that starts from $\$0$ at time 0
and invests $\$1$ into the martingale $S$ at each time $i=1,2,\dots$
\cite[Section 2]{Du/etal:2017}.
If and when an alarm is raised at time $n$,
we apply the same procedure to the remaining observations $z_{n+1},z_{n+2},\dots$.

The following proposition gives a non-asymptotic property of validity of the Shiryaev--Roberts procedure.
Roughly, it says that we do not expect the first alarm to be raised too soon
under the hypothesis of randomness.

\begin{proposition}\label{prop:SR-E}
  The conformal Shiryaev--Roberts procedure \eqref{eq:SR}
  satisfies $\Expect(\tau_1)\ge c$, for any $c>1$,
  under the assumptions of Proposition~\ref{prop:validity}.
\end{proposition}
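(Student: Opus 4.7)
The plan is to identify a convenient martingale derived from the process $(R_n)$ defined by \eqref{eq:R}, and then apply the optional stopping theorem at the bounded stopping time $\tau_1 \wedge n$, finally letting $n \to \infty$.

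First I would verify that $(S_n)$ is a genuine martingale (in the filtration $(\GGG_n)$ of Remark~\ref{rem:tricky}) under the randomness assumption. By Proposition~\ref{prop:validity}, the p-values $p_1,p_2,\dots$ are IID uniform on $[0,1]$, so the defining property \eqref{eq:betting-martingale} of the betting martingale $F$ translates via \eqref{eq:S_n} into $\Expect(S_n \mid \GGG_{n-1}) = S_{n-1}$. Since $S$ is positive by assumption, $S_n/S_{n-1}$ is a well-defined $\GGG_n$-measurable random variable with $\Expect(S_n/S_{n-1} \mid \GGG_{n-1}) = 1$.

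Next I would show that $M_n := R_n - n$ is a martingale with $M_0 = 0$. Applying the recursion \eqref{eq:RR} and the previous step,
\[
  \Expect(R_n \mid \GGG_{n-1})
  =
  (R_{n-1}+1)\,\Expect(S_n/S_{n-1} \mid \GGG_{n-1})
  =
  R_{n-1}+1,
\]
so $\Expect(M_n \mid \GGG_{n-1}) = M_{n-1}$. Since $\tau_1 \wedge n$ is a bounded stopping time, the optional stopping theorem gives $\Expect(R_{\tau_1 \wedge n}) = \Expect(\tau_1 \wedge n)$.

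The final step is to exploit the definition of $\tau_1$. On $\{\tau_1 \le n\}$ we have $R_{\tau_1 \wedge n} = R_{\tau_1} \ge c$, while on $\{\tau_1 > n\}$ we have $R_{\tau_1 \wedge n} = R_n \ge 0$; hence
\[
  \Expect(\tau_1 \wedge n)
  =
  \Expect(R_{\tau_1 \wedge n})
  \ge
  c\,\Prob(\tau_1 \le n).
\]
Letting $n \to \infty$ and using monotone convergence on the left yields $\Expect(\tau_1) \ge c\,\Prob(\tau_1 < \infty)$, which gives the desired inequality: either $\Prob(\tau_1<\infty)=1$ and the bound is immediate, or $\Prob(\tau_1=\infty)>0$ and $\Expect(\tau_1) = \infty \ge c$.

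The only delicate point is the identification of the linear drift that turns $R_n$ into a martingale after subtracting $n$; once that observation is made, optional stopping combined with the overshoot inequality $R_{\tau_1} \ge c$ does all the work, and no integrability issues arise because we stop at the bounded time $\tau_1 \wedge n$ before taking limits.
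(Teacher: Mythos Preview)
Your proof is correct and follows essentially the same route as the paper: both identify $R_n-n$ as a martingale via the recursion~\eqref{eq:RR}, apply optional stopping at the truncated time $\tau_1\wedge n$ (the paper writes $\tau_1\wedge L$), bound $R_{\tau_1\wedge n}$ from below by $c\,1_{\{\tau_1\le n\}}$ using the nonnegativity of $R$, and pass to the limit. The only cosmetic difference is that the paper explicitly checks the integrability of each $R_n$ before calling it a martingale, whereas you leave this implicit; it is in any case immediate from the representation~\eqref{eq:R} since each term $S_n/S_i$ has expectation~$1$.
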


Of course, we can apply Proposition~\ref{prop:SR-E} to other alarm times as well
obtaining $\Expect(\tau_k-\tau_{k-1})\ge c$ for all $k\in\N$
(and similar inequalities for some conditional expectations,
as discussed below in the proof of Proposition~\ref{prop:SR-lim}).
Therefore, more generally,
the time interval between raising successive alarms is not too short
under the hypothesis of randomness.

All results of this section
(from Proposition~\ref{prop:SR-E} to Corollary~\ref{cor:CUSUM-lim})
are general and applicable to any positive martingale $S$.
However, they are usually stated for $S$ being the likelihood ratio
between two power distributions (pre-change and post-change).
To simplify exposition,
I will state them only for $S$ being a positive conformal martingale
with the underlying filtration $(\GGG_n)_{n=0,1,\dots}$,
where $\GGG_n$ is generated by the first $n$ p-values $p_1,\dots,p_n$.
However, our arguments (which are standard in literature on change detection)
will be applicable to any filtration and any positive martingale with respect to that filtration.

\begin{proof}[Proof of Proposition~\ref{prop:SR-E}]
  The proof will follow from the fact that $R_n-n$ is a martingale;
  this fact (noticed, in a slightly different context, in \cite[Theorem~1]{Pollak:1987})
  follows from \eqref{eq:RR}:
  since $S$ is a martingale,
  \[
    \Expect(R_n \mid \GGG_{n-1})
    =
    \frac{\Expect(S_n \mid \GGG_{n-1})}{S_{n-1}}
    \left(
      R_{n-1} + 1
    \right)
    =
    R_{n-1} + 1.
  \]
  Another condition for $R_n-n$ being a martingale requires the integrability of $R_n$,
  which follows from the integrability of each addend in \eqref{eq:R}:
  \[
    \Expect
    \left(
      \frac{S_n}{S_i}
    \right)
    =
    \Expect
    \left(
      \Expect
      \left(
        \frac{S_n}{S_i}
	\mid
	\GGG_i
      \right)
    \right)
    =
    \Expect(1)
    =
    1
    <
    \infty.
  \]

  Fix the threshold $c>1$.
  By Doob's optional sampling theorem (see, e.g., \cite[Chapter 7, Section 2, Theorem 1]{Shiryaev:2019})
  applied to the martingale $R_n-n$,
  \[
    \Expect(\tau_1)
    =
    \Expect(R_{\tau_1})
    \ge
    c.
  \]
  Applying this theorem, however, requires some regularity conditions,
  and the rest of this proof is devoted to checking technical details.

  If $\tau_1=\infty$ with a positive probability,
  we have $\Expect(\tau_1)=\infty\ge c$,
  and so we assume that $\tau_1<\infty$ a.s.
  Doob's optional sampling theorem is definitely applicable to the stopping time $\tau_1\wedge L$,
  where $L$ is a positive constant
  (see, e.g., \cite[Chapter 7, Section 2, Corollary 1]{Shiryaev:2019}),
  and so the nonnegativity of $R$ implies
  \[
    \Expect(\tau_1)
    \ge
    \Expect(\tau_1\wedge L)
    =
    \Expect(R_{\tau_1\wedge L})
    \ge
    \Expect(R_{\tau_1} 1_{\{\tau_1\le L\}})
    \ge
    c \Prob(\tau_1\le L)
    \to
    c
  \]
  as $L\to\infty$, $1_E$ being the indicator function of an event $E$.
\end{proof}

\begin{corollary}\label{cor:CUSUM-E}
  The conformal CUSUM procedure \eqref{eq:CUSUM}
  also satisfies $\Expect(\tau_1)\ge c$
  under the assumptions of Proposition~\ref{prop:validity}.
\end{corollary}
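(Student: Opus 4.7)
The plan is to derive this from Proposition~\ref{prop:SR-E} by a direct pathwise domination: the Shiryaev--Roberts stopping rule triggers no later than the CUSUM rule, so its alarm time provides a stochastic lower bound on the CUSUM alarm time, and then the bound $\Expect(\tau_1^{SR}) \ge c$ transfers to $\tau_1^{CUSUM}$ by monotonicity of expectation.

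Concretely, I would first observe that because $S$ is a positive conformal martingale, every ratio $S_n/S_i$ is strictly positive, and hence for every $n\ge 1$,
\[
  \sum_{i=0}^{n-1} \frac{S_n}{S_i}
  \ge
  \max_{i=0,\dots,n-1} \frac{S_n}{S_i}.
\]
Therefore, on the event $\{\max_{i=0,\dots,n-1} S_n/S_i \ge c\}$ that triggers CUSUM at time $n$, the corresponding Shiryaev--Roberts sum is also at least $c$ at that very time $n$. Writing $\tau_1^{CUSUM}$ and $\tau_1^{SR}$ for the two first-alarm times (both with $\tau_0=0$), this yields the pathwise inequality $\tau_1^{SR} \le \tau_1^{CUSUM}$.

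Taking expectations and invoking Proposition~\ref{prop:SR-E} for the conformal Shiryaev--Roberts procedure then gives
\[
  \Expect(\tau_1^{CUSUM})
  \ge
  \Expect(\tau_1^{SR})
  \ge
  c,
\]
which is the claim. There is essentially no obstacle here: the only points to double-check are that the two procedures use the same index set $\{0,\dots,n-1\}$ in their respective rules (both start from $\tau_0=0$), that positivity of $S$ is indeed part of the standing assumption for the CUSUM and SR conformal procedures in this section (so no term $S_n/S_i$ is undefined or negative), and that the assumptions of Proposition~\ref{prop:validity} under which Proposition~\ref{prop:SR-E} was established are exactly those we are now assuming. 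All three are immediate from the setup preceding the corollary, so no further technical work is required.
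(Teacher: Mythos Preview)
Your proposal is correct and matches the paper's own proof: the paper likewise deduces the corollary from Proposition~\ref{prop:SR-E} via the pathwise domination $\tau_1^{SR}\le\tau_1^{CUSUM}$, noting more generally that Shiryaev--Roberts raises alarms more often than CUSUM (i.e., $\tau_k^{SR}\le\tau_k^{CUSUM}$ for all $k$, by induction). Your justification of the domination for $k=1$ via $\sum\ge\max$ for positive terms is exactly the right observation.
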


\begin{proof}
  All our properties of validity for the CUSUM procedure
  will be deduced from the corresponding properties for Shiryaev--Roberts
  and the fact that Shiryaev--Roberts raises alarms more often than CUSUM does,
  in the following sense.
  Let $\tau_k$ (resp.\ $\tau'_k$) be the time of the $k$th alarm
  raised by Shiryaev--Roberts (resp.\ CUSUM).
  Then $\tau_k\le\tau'_k$ for all $k$;
  this can be checked by induction in~$k$.
\end{proof}

The next proposition is an asymptotic counterpart of Proposition \ref{prop:SR-E}
given in terms of frequencies.

\begin{proposition}\label{prop:SR-lim}
  Let $A_n$ be the number of alarms
  \[
    A_n
    :=
    \max\{k\st\tau_k\le n\}
  \]
  raised by the conformal Shiryaev--Roberts procedure \eqref{eq:SR}
  after seeing the first $n$ observations $z_1,\dots,z_n$.
  Then, under the assumptions of Proposition~\ref{prop:validity},
  \begin{equation}\label{eq:A-LLN}
    \limsup_{n\to\infty}
    \frac{A_n}{n}
    \le
    \frac1c
    \quad
    \text{a.s.}
  \end{equation}
\end{proposition}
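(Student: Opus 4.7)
My plan is to carry out the argument of Proposition~\ref{prop:SR-E} inside each cycle between successive alarms, glue the cycles into a single martingale that compensates for the resets, and then pass from the resulting expectation bound to the almost-sure statement via a martingale strong law.

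First, set $T_k := \tau_k - \tau_{k-1}$. By Proposition~\ref{prop:validity} the p-values $p_1,p_2,\dots$ are iid uniform, and the procedure \eqref{eq:SR} resets the cumulative statistic to $0$ at every alarm, so the residual process after time $\tau_{k-1}$ is, conditionally on $\GGG_{\tau_{k-1}}$, in the same distributional situation as at time~$0$. Running the proof of Proposition~\ref{prop:SR-E} conditionally therefore yields
\[
  \Expect(T_k\mid\GGG_{\tau_{k-1}})\ge c
  \quad\text{on }\{\tau_{k-1}<\infty\}.
\]

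Next, I would assemble a global martingale that absorbs the resets. Let $R^*_n$ be the running Shiryaev--Roberts statistic with resets (so $R^*_n<c$ always, and $R^*_{\tau_k}=0$), and let $\tilde R_n := (R^*_{n-1}+1)S_n/S_{n-1}$ be its pre-reset value. Put
\[
  J_n := \sum_{k\,:\,\tau_k\le n}\tilde R_{\tau_k},
  \qquad
  M_n := R^*_n + J_n - n.
\]
Using $R^*_n = \tilde R_n\mathbf{1}_{\{\tilde R_n<c\}}$ and $J_n-J_{n-1}=\tilde R_n\mathbf{1}_{\{\tilde R_n\ge c\}}$, one finds $M_n-M_{n-1} = (R^*_{n-1}+1)(S_n/S_{n-1}-1)$, so $M_n$ is a zero-mean martingale by the martingale property of~$S$. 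Since $\tilde R_{\tau_k}\ge c$ at every alarm, $J_n\ge cA_n$, and together with $R^*_n\ge 0$ this yields the pointwise inequality
\[
  cA_n \le J_n + R^*_n = n + M_n.
\]
Taking expectations would already give $\Expect(A_n)\le n/c$; to reach the almost-sure statement I would note that $R^*_{n-1}<c$ forces $|M_n-M_{n-1}|\le(c+1)\,|S_n/S_{n-1}-1|$, whence a martingale strong law yields $M_n/n\to 0$ a.s., and dividing the displayed inequality by $cn$ then gives $\limsup_n A_n/n\le 1/c$ a.s.

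The main obstacle is this last step: the paper places no moment assumption on the betting functions, so $\Expect((M_n-M_{n-1})^2\mid\GGG_{n-1})$ may be infinite and the classical $L^2$ martingale strong law is unavailable. My plan for circumventing this would be either to truncate each $f_n$ at a level $L$, run the whole argument with the resulting bounded-increment martingale, and let $L\to\infty$ (using that truncation only decreases $R^*_n$ and $A_n$ pathwise), or to work in the discrete filtration $(\GGG_{\tau_k})_k$ and reduce the desired $\liminf_k\tau_k/k\ge c$ to a Kronecker-type SLLN on the centred sequence $T_k-\Expect(T_k\mid\GGG_{\tau_{k-1}})$, exploiting the trivial lower bound $T_k\ge 1$ to control negative excursions.
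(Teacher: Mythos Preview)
Your global-martingale construction $M_n = R^*_n + J_n - n$ is elegant and genuinely different from the paper's route. The identity $M_n-M_{n-1}=(R^*_{n-1}+1)(S_n/S_{n-1}-1)$ and the pointwise bound $cA_n\le n+M_n$ are correct, and you have correctly isolated the obstacle: without moment assumptions on the betting functions, $M_n/n\to 0$ a.s.\ is not available. The gap is that neither of your two proposed workarounds closes.

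\emph{Truncating $f_n$ at level $L$.} You note yourself that this only \emph{decreases} $R^*_n$ and $A_n$ pathwise. Running your argument in the truncated world gives $\limsup_n A^L_n/n\le 1/c$ for each $L$, but since $A^L_n\le A_n$ this is the wrong inequality to bound the original $A_n$. ``Letting $L\to\infty$'' amounts to interchanging $\limsup_n$ with $\lim_L$, and for a monotone family $b^L_n\uparrow b_n$ the implication $(\forall L)\,\limsup_n b^L_n\le 1/c\Rightarrow\limsup_n b_n\le 1/c$ is false in general (take $b^L_n=1_{\{n\le L\}}$). To make a truncation argument work you would need a modification that \emph{increases} $A_n$.

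\emph{SLLN on $T_k-\Expect(T_k\mid\GGG_{\tau_{k-1}})$.} This is precisely the approach the paper's Remark following the proposition warns against: one can construct stopping times with $T_k\ge 1$ and $\Expect(T_k\mid\GGG_{\tau_{k-1}})=c$ for every $k$ (take $T_k=1$ with probability $1-k^{-2}$ and $T_k=(c-1)k^2+1$ with probability $k^{-2}$), yet $T_k=1$ eventually a.s.\ by Borel--Cantelli, so $\tau_k/k\to 1$. The lower bound $T_k\ge 1$ does not control negative excursions of the centred sequence because $\Expect(T_k\mid\cdot)$ can be arbitrarily large; the Kronecker-type criterion would need $\sum_k\operatorname{Var}(T_k\mid\cdot)/k^2<\infty$, which fails here. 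So one cannot avoid using the specific structure of the Shiryaev--Roberts stopping times.

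The paper's fix is a truncation in the opposite direction: it \emph{forces} an alarm whenever $L$ steps have elapsed since the previous one. This produces modified times $\tau'_k\le\tau_k$, hence $A'_n\ge A_n$ (the correct inequality), and bounded inter-alarm times $\tau'_k-\tau'_{k-1}\le L$, so the SLLN for bounded martingale differences applies. Splitting indices into ``fast'' and ``slow'' according to whether the forced alarm is likely, one shows $\Expect(\tau'_k-\tau'_{k-1}\mid\GGG_{\tau'_{k-1}})\ge c-c^2/L$ for fast $k$ via optional sampling, obtains $\limsup_n A_n/n\le\limsup_n A'_n/n\le 1/(c-c^2/L)$, and lets $L\to\infty$. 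If you wish to rescue your martingale $M_n$, you would need an analogous modification that caps its increments from above while only increasing $A_n$.
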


Under the assumptions of Proposition~\ref{prop:validity},
all alarms are false,
and so \eqref{eq:A-LLN} limits the frequency of false alarms.

\begin{proof}
  Fix a positive conformal martingale $S$ and a threshold $c>0$.
  We can rewrite~\eqref{eq:SR} as
  \begin{equation}\label{eq:SR-R}
    \tau_k
    :=
    \min
    \left\{
      n>\tau_{k-1}
      \st
      R^k_n
      \ge
      c
    \right\},
  \end{equation}
  where
  \begin{equation*}
    R^k_n
    :=
    \sum_{i=\tau_{k-1}}^{n-1}
    \frac{S_n}{S_i}.
  \end{equation*}
  It will be convenient to modify \eqref{eq:SR-R} by forcing an alarm $L$ steps
  after the last one:
  \begin{equation*}
    \tau'_k
    :=
    (\tau'_{k-1}+L)
    \wedge
    \min
    \left\{
      n>\tau'_{k-1}
      \st
      R^{\prime k}_n
      \ge
      c
    \right\},
  \end{equation*}
  where $\tau'_0:=0$ and
  \begin{equation*}
    R^{\prime k}_n
    :=
    \sum_{i=\tau'_{k-1}}^{n-1}
    \frac{S_n}{S_i}.
  \end{equation*}
  (The value of $L$ will be chosen later.)
  Similarly to the proof of Corollary~\ref{cor:CUSUM-E},
  by induction in $k$ we can check that, for all $k$, $\tau'_k\le\tau_k$.

  We still have a recursive representation similar to \eqref{eq:RR} for ($R^k$ and) $R^{\prime k}$.
  Notice that $R^{\prime k}_n$, $n\ge\tau'_{k-1}$, is a nonnegative submartingale
  with $n-\tau'_{k-1}$ as its compensator
  (and we can set $R^{\prime k}_n$ and its compensator to 0 for $n<\tau'_{k-1}$).

  Remember that $\GGG_n$ is the $\sigma$-algebra generated by the p-values $p_1,\dots,p_n$,
  and let $\GGG_{\tau'_k}$ be the $\sigma$-algebra of events $E$ such that
  $E\cap\{\tau'_k\le n\}\in\GGG_n$ for all $n$
  (informally, $\GGG_{\tau'_k}$ consists of the events $E$
  expressible in terms of the p-values and settled at time $\tau'_k$).

  Let us say that $k\in\N$ is \emph{slow} if
  \[
    \Prob
    \left(
      \tau'_k - \tau'_{k-1} = L
      \mid
      \GGG_{\tau'_{k-1}}
    \right)
    \ge
    c/L;
  \]
  otherwise, $k$ is \emph{fast}.
  Notice that the event that $k$ is fast (or slow) is $\GGG_{\tau'_{k-1}}$-measurable.
  By Doob's optional sampling theorem and the nonnegativity of $R^{\prime k}_n$,
  where $n\ge\tau'_{k-1}$,
  for a fast $k$ we obtain, similarly to the proof of Proposition~\ref{prop:SR-E},
  \begin{align*}
    \Expect&
    \left(
      \tau'_k - \tau'_{k-1}
      \mid
      \GGG_{\tau'_{k-1}}
    \right)
    =
    \Expect
    \left(
      R^{\prime k}_{\tau'_k}
      \mid
      \GGG_{\tau'_{k-1}}
    \right)\\
    &=
    \Expect
    \left(
      R^{\prime k}_{\tau'_k}
      1_{\{\tau'_k - \tau'_{k-1} = L\}}
      \mid
      \GGG_{\tau'_{k-1}}
    \right)
    +
    \Expect
    \left(
      R^{\prime k}_{\tau'_k}
      1_{\{\tau'_k - \tau'_{k-1} < L\}}
      \mid
      \GGG_{\tau'_{k-1}}
    \right)\\
    &\ge
    0
    +
    c
    \Expect
    \left(
      1_{\{\tau'_k - \tau'_{k-1} < L\}}
      \mid
      \GGG_{\tau'_{k-1}}
    \right)
    \ge
    c(1-c/L)
    =
    c-c^2/L.
  \end{align*}

  Let $F\subseteq\N$ be the random set of all fast $k$,
  $S:=\N\setminus F$ be the random set of all slow $k$,
  and $F_K$ (resp.\ $S_K$) be the set consisting of the $K$ smallest elements
  of $F$ (resp.\ $S$).
  The strong law of large numbers for bounded martingale differences now implies
  \begin{equation}\label{eq:S}
    \liminf_{K\to\infty}
    \frac1K
    \sum_{k\in S_K}
    (\tau'_k-\tau'_{k-1})
    \ge
    L(c/L)
    =
    c
    \quad
    \text{a.s.}
  \end{equation}
  and
  \begin{equation}\label{eq:F}
    \liminf_{K\to\infty}
    \frac1K
    \sum_{k\in F_K}
    (\tau'_k-\tau'_{k-1})
    \ge
    c - c^2/L
    \quad
    \text{a.s.};
  \end{equation}
  the inequality in \eqref{eq:S} (resp.\ \eqref{eq:F})
  is interpreted as true when $\left|S\right|<\infty$
  (resp.\ $\left|F\right|<\infty$).
  Combining \eqref{eq:S} and \eqref{eq:F},
  we obtain
  \[
    \liminf_{K\to\infty}
    \frac{\tau'_K}{K}
    =
    \liminf_{K\to\infty}
    \frac1K
    \sum_{k=1}^K
    (\tau'_k-\tau'_{k-1})
    \ge
    c - c^2/L
    \quad
    \text{a.s.}
  \]
  Therefore, setting
  \[
    A'_n
    :=
    \max\{k\st\tau'_k\le n\},
  \]
  we have
  \[
    \limsup_{n\to\infty}
    \frac{A_n}{n}
    \le
    \limsup_{n\to\infty}
    \frac{A'_n}{n}
    \le
    \frac{1}{c - c^2/L},
  \]
  and it remains to let $L\to\infty$.
\end{proof}

\begin{remark}
  It might be tempting to deduce \eqref{eq:A-LLN} from Proposition~\ref{prop:SR-E}
  directly using a suitable law of large numbers.
  However, a simple application of the Borel--Cantelli--L\'evy lemma
  shows that we cannot do so without using the specifics of our stopping times $\tau_k$.
  Indeed, assuming $c\in\{2,3,\dots\}$,
  we can define a filtered probability space and stopping times $\tau_k$, $k=0,1,\dots$,
  with $\tau_0:=0$, in such a way that
  \[
    \tau_k - \tau_{k-1}
    =
    \begin{cases}
      1 & \text{with probability $1-k^{-2}$}\\
      (c-1)k^2+1 & \text{with probability $k^{-2}$}
    \end{cases}
  \]
  for all $k\in\N$
  (where the probabilities may be conditional on a suitable $\sigma$-algebra $\GGG_{\tau_{k-1}}$).
  Then $\Expect(\tau_k-\tau_{k-1})=c$ (and $\Expect(\tau_k-\tau_{k-1}\mid\GGG_{\tau_{k-1}})=c$)
  for all $k$ but, almost surely, $\tau_k-\tau_{k-1}=1$ from some $k$ on.
\end{remark}

Of course, the statement of Proposition~\ref{prop:SR-lim} also holds for the CUSUM procedure.

\begin{corollary}\label{cor:CUSUM-lim}
  Let $A_n$ be the number of alarms raised by the conformal CUSUM procedure \eqref{eq:CUSUM}
  after seeing the observations $z_1,\dots,z_n$.
  Then \eqref{eq:A-LLN} holds under the assumptions of Proposition~\ref{prop:validity}.
\end{corollary}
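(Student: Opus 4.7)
The plan is to deduce the corollary from Proposition~\ref{prop:SR-lim} by the same ``Shiryaev--Roberts raises alarms more often than CUSUM'' comparison that underlies the proof of Corollary~\ref{cor:CUSUM-E}. Let $\tau_k$ denote the alarm times of the conformal Shiryaev--Roberts procedure \eqref{eq:SR} and $\tau'_k$ the alarm times of the conformal CUSUM procedure \eqref{eq:CUSUM}, both applied to the same positive conformal martingale $S$ with the same threshold $c$. Write $A_n$ for the CUSUM alarm count (as in the statement of the corollary) and $A^{\mathrm{SR}}_n:=\max\{k\st\tau_k\le n\}$ for the SR alarm count.

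First I would prove, by induction on $k$, that $\tau_k\le\tau'_k$ for every $k\in\N$. The base case $\tau_0=\tau'_0=0$ is immediate. For the inductive step, suppose $\tau_{k-1}\le\tau'_{k-1}$. Within the epoch starting at $\tau'_{k-1}$, each CUSUM statistic $\max_{i=\tau'_{k-1},\dots,n-1} S_n/S_i$ is at most the SR statistic $\sum_{i=\tau'_{k-1}}^{n-1} S_n/S_i$ (all terms are nonnegative, so the maximum is dominated by the sum). Hence, whenever CUSUM triggers at some $n=\tau'_k$, the SR statistic started at $\tau'_{k-1}\ge\tau_{k-1}$ would already have crossed $c$ by time $n$; because the SR procedure restarts at $\tau_{k-1}\le\tau'_{k-1}$ (an earlier or equal time), and adding more nonnegative summands only makes the statistic larger, the SR threshold is crossed no later than $n$. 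So $\tau_k\le\tau'_k$.

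Consequently $A_n\le A^{\mathrm{SR}}_n$ for every $n$, and Proposition~\ref{prop:SR-lim} yields
\[
  \limsup_{n\to\infty}\frac{A_n}{n}
  \le
  \limsup_{n\to\infty}\frac{A^{\mathrm{SR}}_n}{n}
  \le
  \frac1c
  \quad\text{a.s.}
\]

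The main obstacle, such as it is, lies in making the inductive comparison of the two statistics honest: one has to note not only that $\max\le\sum$ termwise, but also that restarting SR at an \emph{earlier} time $\tau_{k-1}\le\tau'_{k-1}$ can only inflate its running statistic relative to a restart at $\tau'_{k-1}$, so the domination persists across epochs. Once this monotonicity in the restart time is checked, the corollary is an immediate transfer from Proposition~\ref{prop:SR-lim}.
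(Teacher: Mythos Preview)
Your proposal is correct and follows essentially the same approach as the paper: reduce to Proposition~\ref{prop:SR-lim} via the comparison $\tau_k\le\tau'_k$ established by induction (using $\max\le\sum$ and monotonicity of the SR statistic in the restart time), exactly as in the proof of Corollary~\ref{cor:CUSUM-E}. You have merely spelled out the inductive step that the paper leaves to the reader.
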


\begin{proof}
  As in the proof of Corollary~\ref{cor:CUSUM-E},
  combine Proposition~\ref{prop:SR-lim}
  with the fact that Shiryaev--Roberts raises alarms more often than CUSUM does.
\end{proof}

\subsection*{Optimality of procedures for change detection}

It is remarkable that both CUSUM and Shiryaev--Roberts procedures are optimal
under some natural conditions and for some natural criteria of optimality.
As already mentioned,
in standard settings of change detection
the task is to detect a change from one known power probability distribution for the incoming data
to another known power probability distribution.
CUSUM and Shiryaev--Roberts procedures are then applied to a specific martingale,
the likelihood ratio of the post-change distribution to the pre-change distribution.
Therefore, they depend on just one parameter, $c$
(for given pre-change and post-change distributions),
whereas in the context of testing randomness we have a wide class
of CUSUM and Shiryaev--Roberts procedures,
built on top of different conformal martingales.

The five standard criteria for the quality of such specific procedures have been referred to
by the letters A--E;
see, e.g., Shiryaev \cite{Shiryaev:2019-disorder}.
Under natural conditions,
Shiryaev--Roberts is optimal under two of the criteria,
and CUSUM is optimal under one of them.
Such statements of optimality are very satisfactory results
about the efficiency of the corresponding procedures.

In this paper we only discuss validity results for CUSUM and Shiryaev--Roberts
in the context of randomness testing and do not claim their optimality.
As discussed at the end of Section~\ref{sec:martingales},
this is a difficult task already for the basic strongly valid testing procedure
using conformal martingales.
The null hypothesis (that of randomness) is composite and, moreover, very large
(for large $\mathbf{Z}$), and we do not specify any alternatives;
we simply do not have enough structure to specify a meaningful optimization problem.

\section{IID probability vs exchangeability probability}
\label{sec:IID-exchangeability}

We will be discussing two related interpretations of the efficiency of conformal martingales:
on one hand, they detect deviations from randomness,
and on the other hand, they detect deviations from exchangeability.
This section makes a break in our discussion of martingales,
and here we will explore the relation between randomness and exchangeability
for finite binary sequences.
(Remember that for infinite sequences they are connected by de Finetti's theorem,
as discussed in Remark~\ref{rem:de-Finetti};
for finite sequences their relation becomes much less close.)

In the 1960s Kolmogorov started revival of the interest in random sequences,
believing that they are important for understanding the applications of probability theory and statistics.
As already mentioned,
he concentrated on binary sequences (as a simple starting point),
in which context he often referred to them as \emph{Bernoulli sequences}.
His first imperfect publication on this topic was the 1963 paper \cite{Kolmogorov:1963}
(Kolmogorov refers to it as ``incomplete discussion'',
according to the English translation of \cite{Kolmogorov:1965-Latin}).
In the same year he conceived using the notion of computability for formalizing randomness.
Kolmogorov's main publications on the algorithmic theory of randomness
were \cite{Kolmogorov:1965-Latin,Kolmogorov:1968-Latin,Kolmogorov:1983-Latin}.

Let $\Omega:=\{0,1\}^N$ be the set of all binary sequences of a given length $N$,
interpreted as sequences of observations.
The time horizon $N\in\N$ can be regarded as fixed in the rest of this paper
(apart from \eqref{eq:log-scale} and the appendix).

Let $B_p$ be the Bernoulli probability measure on $\{0,1\}$ with the probability of 1 equal to $p\in[0,1]$:
$B_p(\{1\}):=p$.
The \emph{upper IID probability} of a set $E\subseteq\Omega$ is defined to be
\begin{equation}\label{eq:IID-prob}
  \UiidP(E)
  :=
  \sup_{p\in[0,1]}
  B_p^N(E),
\end{equation}
and the \emph{upper exchangeability probability} of $E\subseteq\Omega$ is defined to be
\begin{equation}\label{eq:exch-prob}
  \UEP(E)
  :=
  \sup_P
  P(E),
\end{equation}
$P$ ranging over the exchangeable probability measures on $\Omega$
(in the current binary case we can say
that a probability measure $P$ on $\Omega$ is \emph{exchangeable}
if $P(\{\omega\})$ depends on $\omega\in\Omega$ only via the number of 1s in~$\omega$).

\begin{remark}
  The \emph{lower probabilities} corresponding to~\eqref{eq:IID-prob} and~\eqref{eq:exch-prob}
  are $1-\UiidP(\Omega\setminus E)$ and $1-\UEP(\Omega\setminus E)$, respectively.
  In this paper we never need lower probabilities.
\end{remark}

The function $\UiidP$ can be used when testing the hypothesis of randomness:
if $\UiidP(E)$ is small (say, below 1\%)
and the observed sequence $\omega$ is in $E$ that is chosen in advance,
we can reject the hypothesis that the observations in $\omega$ are IID.
Similarly, $\UEP$ can be used when testing the hypothesis of exchangeability.
This is an instance of application of \emph{Cournot's principle},
often regarded to be the only bridge between probability theory and its applications.
The principle was widely discussed at the beginning of the 20th century
and defended by, e.g., Borel, L\'evy, and Kolmogorov \cite[Section 2.2]{Shafer/Vovk:2006SS}.
Kolmogorov's statement of Cournot's principle in his \emph{Grundbegriffe}
\cite[Chapter I, Section 2]{Kolmogorov:1933} is
\begin{quote}
  If $\textsf{P}(A)$ is very small,
  then one can be practically certain that the event $A$ will not occur
  on a single realization of the conditions $\mathfrak{S}$.
\end{quote}
(The conditions $\mathfrak{S}$ in this quote refer to the probability trial under discussion.)
In the form stated by Kolmogorov, the principle goes back to Jacob Bernoulli \cite{Bernoulli:1713}
(see, e.g., \cite[Section 2.2]{Shafer/Vovk:2006SS}).
It establishes a bridge between probability theory and our expectations about reality;
observing an event $A$ (assumed to be chosen in advance) of a small probability casts doubt on $\textsf{P}$.
Cournot's \cite[p.~78]{Cournot:1843} contribution was to state that this is the only bridge
between probability theory and reality.

Cournot's principle suggests the following understanding of the efficiency
of a method of testing the hypothesis of randomness:
given any event $E$ such that $\UiidP(E)$ is very small,
the method should allow us to reject the hypothesis of randomness
after observing $E$.

From the point of view Bayesian statistics,
a possible justification of Cournot's principle
is the possibility of transforming p-values to Bayes factors;
e.g., $p\mapsto2\sqrt{p}$ is a \emph{calibrator}, in the sense of transforming p-values into Bayes factors
in favor of the null hypothesis (small values serving as evidence against the null hypothesis).
For details, see, e.g., \cite[Section 6]{Shafer/etal:2011}.
Therefore, small p-values transform into small Bayes factors.

\begin{remark}
  More generally,
  the function $p\mapsto p^{1-\kappa}/\kappa$ (the reciprocal to \eqref{eq:betting-function})
  is a calibrator,
  for any $\kappa>0$ \cite[Section 6]{Shafer/etal:2011}.
  A popular way of getting rid of the parameter $\kappa$,
  popularized by Berger and his co-authors
  (see, e.g., \cite{Benjamin/Berger:2019}) is to minimize over it,
  getting
  \[
    \min_{\kappa\in(0,1)}
    p^{1-\kappa}/\kappa
    =
    -e p \log p.
  \]
  The reciprocal to this value (corresponding to Bayes factors against the null hypothesis)
  is sometimes (e.g., in the JASP package) referred to as the Vovk--Sellke maximum p-ratio
  (it was introduced independently in \cite{Vovk:1993logic} and \cite{Sellke/etal:2001}).
  It is not useful for us in this paper since it is not a valid Bayes factor.
\end{remark}

\begin{proposition}\label{prop:1}
  For any $E\subseteq\Omega$,
  \begin{equation}\label{eq:1}
    \UiidP(E) \le \UEP(E) \le 1.5 \sqrt{N} \UiidP(E).
  \end{equation}
\end{proposition}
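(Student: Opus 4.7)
The first inequality is immediate: every power measure $B_p^N$ is exchangeable, so the supremum defining $\UEP(E)$ is taken over a larger class than that defining $\UiidP(E)$. The real content is the second inequality.

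My plan is to exploit the simple structure of exchangeable measures on $\Omega=\{0,1\}^N$. Write $|\omega|$ for the number of $1$s in $\omega$, and let $U_k$ denote the uniform probability measure on the ``slice'' $\{\omega\in\Omega:|\omega|=k\}$. Since an exchangeable $P$ assigns the same mass to every $\omega$ on a given slice, it can be written as $P=\sum_{k=0}^N P(|\omega|=k)\,U_k$, i.e.\ as a convex combination of the $U_k$. Therefore
\[
  \UEP(E)=\max_{0\le k\le N} U_k(E),
\]
the maximum being attained by the exchangeable measure $U_k$ itself.

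Next I would relate each $U_k(E)$ to $\UiidP(E)$ through the Bernoulli measure $B_{k/N}^N$, which is itself a mixture of the slice measures:
\[
  B_{k/N}^N(E)=\sum_{j=0}^N\binom{N}{j}(k/N)^j(1-k/N)^{N-j}U_j(E)
  \ge \binom{N}{k}(k/N)^k(1-k/N)^{N-k}\,U_k(E),
\]
where we keep only the $j=k$ term (and, for $k\in\{0,N\}$, use the convention $0^0=1$, in which case the binomial coefficient already equals $1$ and there is nothing to prove). Hence
\[
  U_k(E)\;\le\;\frac{\UiidP(E)}{\binom{N}{k}(k/N)^k(1-k/N)^{N-k}},
\]
and combining this with the previous display reduces the problem to the purely combinatorial estimate
\[
  \binom{N}{k}(k/N)^k(1-k/N)^{N-k}\;\ge\;\frac{1}{1.5\sqrt{N}}
  \qquad (0\le k\le N).
\]

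The main obstacle is verifying this estimate with the constant $1.5$. For $k\in\{0,N\}$ the left-hand side is $1$ and the inequality is trivial. For $1\le k\le N-1$, Stirling's formula in the sharp form $n!=\sqrt{2\pi n}(n/e)^n e^{\theta_n}$ with $\theta_n\in(1/(12n+1),1/(12n))$ gives
\[
  \binom{N}{k}(k/N)^k(1-k/N)^{N-k}
  \;\ge\;\sqrt{\frac{N}{2\pi k(N-k)}}\,\exp\!\Bigl(-\tfrac{1}{12k}-\tfrac{1}{12(N-k)}\Bigr),
\]
and since $k(N-k)\le N^2/4$, the right-hand side is at least $\sqrt{2/(\pi N)}$ times the Stirling correction; that is, the bound asymptotes to $\sqrt{\pi/2}\,\sqrt{N}\approx 1.2533\sqrt{N}$, comfortably below $1.5\sqrt{N}$. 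The remaining work is to check the small-$N$ cases (where the Stirling error term bites most) directly, which is a short case analysis—e.g.\ for $N=2,3,4$ one evaluates the expression at $k=\lfloor N/2\rfloor$ and compares with $1/(1.5\sqrt{N})$. This completes the second inequality.
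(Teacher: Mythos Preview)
Your proof is correct and follows essentially the same route as the paper: reduce $\UEP(E)$ to $\max_k U_k(E)$, compare $U_k$ with $B_{k/N}^N$ on the $k$th slice, and bound the binomial mode via Stirling. One small point: the separate small-$N$ check you propose is unnecessary, since for all $1\le k\le N-1$ the Stirling error satisfies $-\tfrac{1}{12k}-\tfrac{1}{12(N-k)}\ge -\tfrac{1}{6}$, so your lower bound already gives $\sqrt{2/\pi}\,e^{-1/6}\approx 0.675>1/1.5$ uniformly --- equivalently the paper's constant $\sqrt{2\pi}\,e^{1/6}/2\approx 1.481<1.5$.
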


\begin{proof}
  The first inequality in \eqref{eq:1}
  follows from each power probability measure on $\Omega$ being exchangeable.
  If $E$ contains either the all-0 sequence $0\dots0$ or the all-1 sequence $1\dots1$,
  the second inequality in \eqref{eq:1} is obvious
  ($\UiidP(E)=\UEP(E)=1$).
  If $E$ is empty, it is also obvious
  ($\UiidP(E)=\UEP(E)=0$).
  Finally, if $E$ is nonempty and contains neither sequence,
  we have, for some $k\in\{1,\dots,N-1\}$,
  \begin{align}
    \UEP(E)
    &=
    \UEP(E\cap\Omega_k)
    =
    \frac{1/\binom{N}{k}}{(k/N)^k(1-k/N)^{N-k}}
    \UiidP(E\cap\Omega_k)\label{eq:1-1}\\
    &\le
    \frac{k!(N-k)!N^N}{N!k^k(N-k)^{N-k}}
    \UiidP(E)
    \le
    \sqrt{2\pi}e^{1/6}
    \sqrt{\frac{k(N-k)}{N}}
    \UiidP(E)\label{eq:1-2}\\
    &\le
    (\sqrt{2\pi}e^{1/6}/2)
    \sqrt{N}
    \UiidP(E)
    \le
    1.5
    \sqrt{N}
    \UiidP(E),\label{eq:1-3}
  \end{align}
  where $\Omega_k$ is the set of all sequences in $\Omega$ containing $k$ 1s.
  The first equality in \eqref{eq:1-1} follows from each exchangeable probability measure on $\Omega$
  being a convex mixture of the uniform probability measures on $\Omega_k$, $k=0,\dots,N$.
  The second equality in \eqref{eq:1-1} follows from the maximum of $B_p(\{\omega\})$,
  $\omega\in\Omega_k$, over $p\in[0,1]$
  being attained at $p=k/N$.
  The first inequality in \eqref{eq:1-2} is equivalent to the obvious
  $\UiidP(E\cap\Omega_k)\le\UiidP(E)$.
  The second inequality in \eqref{eq:1-2} follows from Stirling's formula
  \begin{equation}\label{eq:Stirling}
    n!
    =
    \sqrt{2\pi} n^{n+1/2} e^{-n} e^{r_n},
    \quad
    0 < r_n < \frac{1}{12n},
  \end{equation}
  valid for all $n\in\N$;
  see, e.g., \cite{Robbins:1955},
  where it is also shown that $r_n > \frac{1}{12n+1}$.
  The first inequality in \eqref{eq:1-3} follows from $\max_{p\in[0,1]}p(1-p)=1/4$.
\end{proof}

\begin{remark}
  The constant 1.5 in inequality \eqref{eq:1} is not too far from being optimal:
  when $N=2$ and $E=\{(0,1)\}$,
  it can be improved only to $\sqrt{2}\approx1.414$.
  Notice that our argument in fact gives $\sqrt{2\pi}e^{1/6}/2\approx1.481$
  instead of $1.5$.
\end{remark}

Kolmogorov's \cite{Kolmogorov:1968-Latin,Kolmogorov:1983-Latin} implicit interpretation of~\eqref{eq:1}
was that $\UiidP$ and $\UEP$ are close;
on the log scale we have
\begin{equation}\label{eq:log-scale}
  -\log\UiidP(E) = -\log\UEP(E) + O(\log N),
\end{equation}
whereas typical values of $-\log\UiidP(E)$ and $-\log\UEP(E)$ have the order of magnitude $N$
for small (but non-zero) $\left|E\right|$.
See Appendix~\ref{sec:ATR} for further details.

From the point of view of Cournot's principle,
Proposition~\ref{prop:1} may be interpreted as saying that there is not much difference
between testing randomness and testing exchangeability.
If we have a test with critical region $E$ of size $\epsilon$ for testing exchangeability,
we can use it for testing randomness and its size will not increase;
in the opposite direction,
if we have a test with critical region $E$ of size $\epsilon$ for testing randomness,
we can use it for testing exchangeability, and its size will increase to at most $1.5 \sqrt{N} \epsilon$.
On the log-scale of Equation \eqref{eq:log-scale}
the difference between the evidence provided by $E$
against the hypothesis of randomness and against the hypothesis of exchangeability
is $O(\log N)$;
it is clear that the left-hand side of \eqref{eq:log-scale} can be as large as $N$
(for a non-empty $E$ and assuming that the logarithms are binary).
In the algorithmic theory of randomness it is customary to ignore such differences,
although from the point of view of statistics, the difference is substantial.

\section{Conformal probability}
\label{sec:probability}

In this section we explore the efficiency of conformal martingales,
restricting ourselves to the simple case of a finite horizon $N$
(as in the previous section).
First we will define upper conformal probability $\UCP$,
an analogue of $\UiidP$ and $\UEP$ for testing randomness using conformal martingales.
Our simple version of upper conformal probability will be sufficient for our current purpose;
there are other natural definitions.
The \emph{upper conformal probability} of $E\subseteq\Omega$ is
\begin{equation}\label{eq:UCP}
  \UCP(E)
  :=
  \inf\{S_0\st\forall(z_1,\dots,z_N)\in E:S_N(z_1,\theta_1,z_2,\theta_2,\dots)\ge1 \text{ $\theta$-a.s.}\},
\end{equation}
where $S$ ranges over the nonnegative conformal martingales,
``$\theta$-a.s.''\ refers to the uniform probability measure over $(\theta_1,\theta_2,\dots)\in[0,1]^{\infty}$,
and $S_0$ stands for the constant $S_0(z_1,\theta_1,z_2,\theta_2,\dots)$.
The definition \eqref{eq:UCP} is in the spirit of \cite[Section~2.1]{Shafer/Vovk:2019};
$\UCP(E)<\epsilon$ for a small $\epsilon>0$ means that there exists a nonnegative conformal martingale
with a small initial value, below $\epsilon$,
that almost surely increases its value manyfold, to at least 1,
if the event $E$ happens.
Therefore, we do not expect this event to happen under the hypothesis of randomness.
This is spelled out in the following lemma.

\begin{lemma}
  For any event $E$,
  $\UiidP(E)\le\UCP(E)$.
\end{lemma}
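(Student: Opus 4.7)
The plan is to use any near-optimal witness martingale in the definition of $\UCP(E)$ as a test statistic, and then invoke Markov's inequality together with the martingale property under every power distribution.

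Fix $\epsilon>0$ and choose a nonnegative conformal martingale $S$, with constant initial value $S_0<\UCP(E)+\epsilon$, such that $S_N(z_1,\theta_1,z_2,\theta_2,\dots)\ge1$ holds $\theta$-almost surely for every $(z_1,\dots,z_N)\in E$. Fix also $p\in[0,1]$ and consider the product measure $\Prob:=B_p^N\otimes U^{\infty}$ on $(z,\theta)\in\Omega\times[0,1]^{\infty}$, where $U^{\infty}$ is the uniform measure on the $\theta$-coordinates. By Fubini's theorem applied to the definition of the $\theta$-a.s.\ condition, the event $\{z\in E\}$ is contained, up to a $\Prob$-null set, in the event $\{S_N\ge1\}$.

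Next I would verify that $S$ is a $\Prob$-martingale with $\Expect(S_N)=S_0$. By the representation \eqref{eq:S_n} we have $S_m=F(p_1,\dots,p_m)$, where $F$ is a betting martingale and $p_1,p_2,\dots$ are the conformal p-values computed from $(z,\theta)$. Under $\Prob$ the observations $z_1,\dots,z_N$ are IID and independent of $\theta_1,\theta_2,\dots$, which are IID uniform, so Proposition~\ref{prop:validity} yields that $p_1,p_2,\dots$ are IID uniform on $[0,1]$. The betting-martingale identity \eqref{eq:betting-martingale} then gives $\Expect(F(p_1,\dots,p_n)\mid p_1,\dots,p_{n-1})=F(p_1,\dots,p_{n-1})$, so $S$ is a nonnegative martingale in the filtration generated by the p-values, and in particular $\Expect(S_N)=S_0$.

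Combining these two observations with Markov's inequality,
\[
  B_p^N(E)
  =
  \Prob(z\in E)
  \le
  \Prob(S_N\ge1)
  \le
  \Expect(S_N)
  =
  S_0
  <
  \UCP(E)+\epsilon.
\]
Taking the supremum over $p\in[0,1]$ yields $\UiidP(E)\le\UCP(E)+\epsilon$, and letting $\epsilon\downarrow0$ gives the required inequality. I expect no real obstacle here: the only subtle point is handling the $\theta$-a.s.\ clause carefully via Fubini so that the Markov-inequality step compares the right events, but this is routine. If the infimum in \eqref{eq:UCP} is not attained, the $\epsilon$-approximation above suffices; if the set over which the infimum is taken is empty, then $\UCP(E)=\infty$ and the inequality is trivial.
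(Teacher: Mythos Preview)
Your proof is correct and follows essentially the same route as the paper's: pick a witnessing conformal martingale, use that it is a genuine martingale under any power distribution (so $\Expect S_N=S_0$), and compare $S_N$ with the indicator $1_E$. The paper phrases the last step as the direct integral inequality $P(E)=\int 1_E\le\int S_N=S_0$, whereas you go through Markov's inequality and an explicit $\epsilon$-approximation and Fubini argument; these are the same argument at different levels of detail.
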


\begin{proof}
  Let $P$ be a power probability measure on $\Omega$
  and $S$ be a nonnegative conformal martingale
  satisfying the condition in \eqref{eq:UCP}.
  It suffices to prove
  \begin{equation}\label{eq:goal}
    P(E)\le S_0;
  \end{equation}
  indeed, we can then obtain $\UiidP(E)\le\UCP(E)$
  by taking $\sup$ of the left-hand side of \eqref{eq:goal} over $P$
  and taking $\inf$ of the right-hand side over $S$.

  To check \eqref{eq:goal},
  remember that $S_N\ge1_E$ a.s., where $1_E$ is the indicator of $E$.
  Since $S$ is a nonnegative martingale under $P$
  (like any nonnegative conformal martingale),
  we have
  \[
    P(E) = \int 1_E \dd P \le \int S_N \dd P = S_0.
    \qedhere
  \]
\end{proof}

We will use upper conformal probability to make the notion of efficiency
for conformal martingales more precise.
Namely, if $\UiidP$ and $\UCP$ are shown to be close,
this could be interpreted as conformal martingales being able to detect any deviations from randomness.
By Cournot's principle,
any deviations from randomness are demonstrated by indicating in advance an event $E$
of small probability under any power probability measure,
i.e., such that $\UiidP(E)$ is small,
which then happens.
If $\UiidP$ and $\UCP$ are close, $\UCP(E)$ will also be small.
This means that there exists a nonnegative conformal martingale $S$
that increases its value manyfold when $E$ happens.
We can choose $S$ in advance since $E$ is chosen in advance.
And such an $S$ will be successful whenever $E$ is.

The following proposition shows that $\UEP$ and $\UCP$ are close,
in the sense similar to the closeness of $\UiidP$ and $\UEP$ asserted in Proposition~\ref{prop:1}
(see also~\eqref{eq:log-scale}).

\begin{proposition}\label{prop:2}
  For any $E\subseteq\Omega$,
  \begin{equation}\label{eq:2}
    \UEP(E) \le \UCP(E) \le N\UEP(E).
  \end{equation}
\end{proposition}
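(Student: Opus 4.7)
The lower bound $\UEP(E) \le \UCP(E)$ follows by repeating the preceding lemma's argument for an arbitrary exchangeable $P$ on $\Omega$: Remark~\ref{rem:validity} says that under $P$ the p-values $p_1,\ldots,p_N$ produced by any nonconformity measure are IID uniform on $[0,1]$, so every nonnegative conformal martingale $S$ is a $P$-martingale and $S_0=\Expect_P[S_N]\ge P(E)$ whenever $S_N\ge 1_E$ a.s.; taking $\sup_P$ and $\inf_S$ gives the claim.

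For the upper bound I would first dispose of the regime $\UEP(E)\ge 1/N$: there $\UCP(E)\le 1\le N\UEP(E)$, witnessed by the constant unit conformal martingale. So assume $\UEP(E)<1/N$; then $\Omega_0$ and $\Omega_N$ (both singletons) lie outside $E$, so $|E\cap\Omega_k|/\binom{N}{k}=0$ for $k\in\{0,N\}$.

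The construction uses the nonconformity measure $\alpha_i:=z_i$. From~\eqref{eq:p} one reads off that, for any fixed $\omega^*=(z_1^*,\ldots,z_N^*)\in E$, the $n$-th p-value $p_n(\omega^*,\theta)$ lies $\theta$-almost surely in
$$I_n^{(\omega^*)} := \begin{cases} [0,k_n^*/n] & \text{if } z_n^*=1,\\ [k_n^*/n,1] & \text{if } z_n^*=0,\end{cases}$$
where $k_n^*$ is the number of ones in $z_1^*,\ldots,z_n^*$. The function $F^{(\omega^*)}(u_1,\ldots,u_m):=\prod_{i\le m} 1_{I_i^{(\omega^*)}}(u_i)\cdot\prod_{i>m} |I_i^{(\omega^*)}|$ satisfies~\eqref{eq:betting-martingale} coordinate-by-coordinate and is therefore a betting martingale; a telescoping bookkeeping argument---the ones contribute the numerator factors $1,2,\ldots,k$ and the zeros contribute $1,2,\ldots,N-k$, all over the common denominator $N!$---gives $F^{(\omega^*)}_0=\prod_n |I_n^{(\omega^*)}|=k!(N-k)!/N!=1/\binom{N}{k}$, where $k:=k(\omega^*)$. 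The corresponding conformal martingale $S^{(\omega^*)}$ is nonnegative, has initial value $1/\binom{N}{k}$, and satisfies $S^{(\omega^*)}_N(\omega^*,\theta)=1$ $\theta$-a.s.\ since each $p_n(\omega^*,\theta)\in I_n^{(\omega^*)}$ by construction.

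Sums of conformal martingales sharing a nonconformity measure are again conformal martingales (the defining identity~\eqref{eq:betting-martingale} is linear), so $S:=\sum_{\omega^*\in E} S^{(\omega^*)}$ is a nonnegative conformal martingale with $S_N\ge 1_E$ $\theta$-a.s.\ and
$$S_0 = \sum_{\omega^*\in E}\frac{1}{\binom{N}{k(\omega^*)}} = \sum_{k=0}^N\frac{|E\cap\Omega_k|}{\binom{N}{k}} = \sum_{k=1}^{N-1}\frac{|E\cap\Omega_k|}{\binom{N}{k}} \le (N-1)\UEP(E) < N\UEP(E),$$
the last inequality using $\UEP(E)=\max_k|E\cap\Omega_k|/\binom{N}{k}$. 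Hence $\UCP(E)\le N\UEP(E)$. The main subtle step is spotting the intervals $I_n^{(\omega^*)}$ together with the product identity $\prod_n |I_n^{(\omega^*)}|=1/\binom{N}{k(\omega^*)}$; the rest is routine.
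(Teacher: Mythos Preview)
Your proof is correct and follows essentially the same approach as the paper's: both use the identity nonconformity measure $\alpha_i=z_i$, build for each $\omega^*$ the all-or-nothing betting martingale on the intervals $I_n^{(\omega^*)}$, obtain the telescoping identity $\prod_n|I_n^{(\omega^*)}|=k!(N-k)!/N!$, and then sum over $\omega^*\in E$ to bound $S_0$ by $\sum_k |E\cap\Omega_k|/\binom{N}{k}\le N\UEP(E)$. The only cosmetic difference is in the edge-case handling: you dispose of $\UEP(E)\ge1/N$ up front (so both extremal slices $\Omega_0,\Omega_N$ drop out and you get the slightly sharper $(N-1)\UEP(E)$), whereas the paper treats the case ``$E$ contains the all-0 sequence'' separately at the end.
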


Proposition~\ref{prop:2} is a statement of efficiency for conformal martingales.
It says that, at our crude scale,
lack of exchangeability can be detected using conformal martingales.
Namely, given a critical region $E$ of a very small size $\epsilon:=\UEP(E)$,
we can construct a nonnegative conformal martingale
with initial capital $N\epsilon$ or less that attains capital of at least $1$ when $E$ happens.

Combining the right-hand sides of \eqref{eq:1} and \eqref{eq:2} we obtain
\begin{equation*}
  \UCP(E) \le 1.5 N^{1.5} \UiidP(E).
\end{equation*}
This inequality says that conformal martingales are efficient at detecting deviations
not only from exchangeability but also from randomness.
Given a critical region $E$ of a very small size $\epsilon:=\UiidP(E)$,
there exists a nonnegative conformal martingale
that increases an initial capital of $1.5 N^{1.5}\epsilon$
to at least $1$ when $E$ happens.

\begin{proof}[Proof of Proposition~\ref{prop:2}]
  First we check the left inequality in \eqref{eq:2}.
  We will do even more:
  we will check that it remains true
  even if the right-hand side of \eqref{eq:UCP}
  is replaced by
  \begin{equation}\label{eq:stronger}
    \inf\{S_0\st\forall(z_1,\dots,z_N)\in E:\Expect_{\theta} S_N(z_1,\theta_1,z_2,\theta_2,\dots)\ge1\},
  \end{equation}
  where the $\Expect_{\theta}$ refers to the uniform probability measure
  over $(\theta_1,\theta_2,\dots)\in[0,1]^{\infty}$.
  Notice that $S_0,\dots,S_N$ in \eqref{eq:UCP} is a martingale
  in the filtration $(\GGG_n)$ generated by the p-values $p_1,\dots,p_N$
  under any exchangeable probability measure on $\Omega$;
  this follows from the fact that $p_1,\dots,p_N$ are IID and uniform on $[0,1]$
  under any exchangeable probability measure
  (see Remark~\ref{rem:validity}).
  Therefore, for each $E\subseteq\Omega$
  and each nonnegative conformal martingale $S$ such that $\Expect_{\theta} S_N\ge1_E$,
  we have
  \begin{equation}\label{eq:we-have}
    \Prob_z(E)
    \le
    \Prob_z(\Expect_{\theta}S_N\ge1)
    \le
    \Expect_z(\Expect_{\theta} S_N)
    =
    \Expect_{z,\theta} S_N
    =
    S_0,
  \end{equation}
  where $\Prob_z$ refers to $(z_1,\dots,z_N)\sim P$,
  $P$ is an exchangeable probability measure on $\Omega$,
  $\Expect_{\theta}$ refers to $(\theta_1,\dots,\theta_N)\sim U^N$,
  $U$ is the uniform probability measure on $[0,1]$,
  and $\Expect_{z,\theta}$ refers to $(z_1,\dots,z_N)\sim P$ and $(\theta_1,\dots,\theta_N)\sim U^N$ independently.
  Taking the $\sup$ of the leftmost expression in \eqref{eq:we-have} over $P$
  and the $\inf$ of the rightmost expression in \eqref{eq:we-have} over $S$,
  we obtain the left inequality in \eqref{eq:2}.

  It remains to check the right inequality in \eqref{eq:2}.
  Let us first check the part ``$\le$'' of the first equality in
  \begin{equation*}
    \UCP(\{\omega\})
    =
    \frac{k!(N-k)!}{N!}
    =
    \UEP(\{\omega\}),
  \end{equation*}
  where $k\in\{0,\dots,N\}$ and $\omega\in\Omega$ contains $k$ 1s
  (the part ``$\ge$'' was established in the previous paragraph;
  it will not be used in the rest of this proof).

  Let $\omega=(z_1,\dots,z_N)$ be the representation of $\omega$ as a sequence of bits.
  Consider the nonnegative conformal martingale $S^{\omega}$
  obtained from the identity nonconformity measure $A(z_1,\dots,z_n):=(z_1,\dots,z_n)$
  and a betting martingale $F$ such that $F(\Box)=1/\binom{N}{k}$
  (where $\Box$ is the empty sequence) and
  \[
    \frac{F(p_1,\dots,p_{n-1},p_n)}{F(p_1,\dots,p_{n-1})}
    :=
    \begin{cases}
      \frac{n}{k_n} & \text{if $p_n\le k_n/n$ and $z_n=1$}\\
      \frac{n}{n-k_n} & \text{if $p_n\ge k_n/n$ and $z_n=0$}\\
      0 & \text{otherwise},
    \end{cases}
  \]
  where $n=1,\dots,N$ and $k_n$ is the number of 1s in $\omega$ observed so far,
  \[
    k_n
    :=
    \left|
      \{j\in\{1,\dots,n\}\st z_j=1\}
    \right|;
  \]
  in particular, $k_N=k$.
  (Intuitively,
  $S^{\omega}$ gambles recklessly on the $n$th observation being $z_n$.)
  If the actual sequence of observations happens to be $\omega$,
  on step $n$ the value of the martingale $S^{\omega}$ is multiplied, a.s.,
  by the fraction whose numerator is $n$
  and whose denominator is the number of bits $z_n$ observed in $\omega$ so far.
  The product of all these fractions over $n=1,\dots,N$
  will have $N!$ as its numerator and $k!(N-k)!$ as its denominator.
  This conformal martingale is almost deterministic,
  in the sense of not depending on $\theta_n$ provided $\theta_n\notin\{0,1\}$,
  and its final value on $\omega$ is, a.s.,
  \[
    \frac{1}{\binom{N}{k}}
    \frac{N!}{k!(N-k)!}
    =
    1.
  \]

  To move from singletons to arbitrary $E\subseteq\Omega$,
  notice that a finite linear combination of conformal martingales $S^{\omega}$
  with positive coefficients
  is again a conformal martingale,
  since they involve the same nonconformity measure, and betting martingales can be combined.
  Fix $E\subseteq\Omega$ and remember that $\Omega_k$ is the set of all sequences in $\Omega$
  containing $k$ 1s.
  Represent $E$ as the disjoint union
  \[
    E = \bigcup_{k=0}^N E_k,
    \quad
    E_k \subseteq \Omega_k,
  \]
  and let $U_k$ be the uniform probability measure on $\Omega_k$.
  We then have
  \begin{multline*}
    \UCP(E)
    \le
    \sum_{\omega\in E}
    \UCP(\{\omega\})
    =
    \sum_{k=0}^N
    \sum_{\omega\in E_k}
    \UCP(\{\omega\})
    =
    \sum_{k=0}^N
    \sum_{\omega\in E_k}
    \UEP(\{\omega\})\\
    =
    \sum_{k=0}^N
    U_k(E_k)
    \le
    N
    \max_{k=0,\dots,N}
    U_k(E_k)
    =
    N\UEP(E),
  \end{multline*}
  where the last inequality holds when, e.g., $E$ does not contain the all-0 sequence $0\dots0\in\Omega$.
  If $E$ does contain the all-0 sequence,
  it is still true that
  \[
    \UCP(E)
    \le
    1
    \le
    N
    =
    N\UEP(E).
    \qedhere
  \]
\end{proof}

\section{Conclusion}
\label{sec:conclusion}

This paper gives a review of known methods of testing randomness online,
all of which are based on conformal martingales.
It raises plenty of questions, without giving many answers.
Propositions~\ref{prop:1} and~\ref{prop:2}
say that IID, exchangeability, and conformal upper probabilities are close,
but the accuracy of these statements is very low
and far from meaningful in practice.
The most obvious direction of further research is to obtain more accurate results
(a simple example related to Proposition~\ref{prop:1} will be given in Appendix~\ref{sec:ATR}).
It would be ideal to establish exact bounds on upper conformal probability
in terms of upper IID probability and upper exchangeability probability.
The most natural definition of upper conformal probability in this context
might involve randomness in a more substantial way than our official definition \eqref{eq:UCP} does
(cf., e.g., \eqref{eq:stronger}).

\subsection*{Acknowledgements}

The original version of this paper was written in support of my poster at ISIPTA 2019
(Eleventh International Symposium on Imprecise Probabilities: Theories and Applications)
presented on 5 July 2019.
I am grateful to all discussants,
including Thomas Dietterich, Wouter Koolen, and Glenn Shafer.
This research has been supported by Astra Zeneca and Stena Line.
The experiments with the tangent metric are based on Daniel Keysers's \cite{Keysers:2000} implementation in C.
Bartels's test for randomness used in Section~\ref{sec:batch}
is from the R package \texttt{randtests} (Testing randomness in R)
by Frederico Caeiro and Ayana Mateus~\cite{Mateus/Caeiro:2014}.
Comments by three reviewers have been extremely useful for improving the presentation;
they led, in particular, to the inclusion of experiments with the \texttt{Absenteeism} dataset
and to the explicit discussion of Cournot's principle.

\appendix
\section{Connections with the algorithmic theory of randomness}
\label{sec:ATR}

The emphasis of this appendix will be on Kolmogorov's approach to randomness and exchangeability
expressed in Martin-L\"of's \cite{Martin-Lof:1966} terms,
which are closer to the traditional statistical language.
(Kolmogorov's original definitions, equivalent but given in terms of algorithmic complexity,
will be discussed in Appendix~\ref{sec:ATC}.)
In our terminology we will follow \cite{Vovk/Vyugin:1993}.
Following Kolmogorov \cite{Kolmogorov:1965-Latin,Kolmogorov:1968-Latin,Kolmogorov:1983-Latin},
we will only consider the case of binary observations.

A \emph{measure of randomness} is an upper semicomputable function $f:\{0,1\}^*\to[0,1]$
such that, for any $N\in\N$, any power probability measure $P$ on $\{0,1\}^N$,
and any $\epsilon>0$, we have \eqref{eq:p-variable}.
The upper semicomputability of $f$ means that there exists an algorithm
that, when fed with a rational number $r$ and sequence $\omega\in\{0,1\}^*$,
eventually stops if $f(\omega)<r$ and never stops otherwise.

In other words, a measure of randomness
is a family of p-variables for testing randomness in $\{0,1\}^N$.
The requirement of upper semicomputability is natural:
e.g., if $f(\omega)<0.01$ (the p-value is highly statistically significant),
we should learn this eventually.

Analogously, a \emph{measure of exchangeability} is an upper semicomputable function $f:\{0,1\}^*\to[0,1]$
such that, for any $N\in\N$, any exchangeable measure $P$ on $\{0,1\}^N$,
and any $\epsilon>0$, we have \eqref{eq:p-variable}.

\begin{lemma}\label{lem:universal}
  There exists a measure of randomness $f$ (called \emph{universal})
  such that any other measure of randomness $f'$ satisfies $f=O(f')$.
  There exists a measure of exchangeability $f$ (called \emph{universal})
  such that any other measure of exchangeability $f'$ satisfies $f=O(f')$.
\end{lemma}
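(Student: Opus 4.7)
The plan is to adapt the classical universal-test construction from algorithmic randomness (due, in closely related forms, to Martin-L\"of and Levin). I describe the argument for the first assertion; the second is obtained by replacing ``power'' by ``exchangeable'' throughout.

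\textbf{Step 1: enumeration.} Fix an effective enumeration $M_1,M_2,\dots$ of Turing machines that enumerate sets of pairs $(\omega,r)\in\{0,1\}^*\times(\mathbb{Q}\cap[0,1])$, and let
\[
  f_i(\omega):=\inf\bigl\{r\st M_i\text{ enumerates }(\omega,r)\bigr\}\wedge 1
\]
(with $\inf\emptyset:=1$). Every upper semicomputable function $\{0,1\}^*\to[0,1]$ arises as some $f_i$.

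\textbf{Step 2: effective correction.} From each $M_i$ I construct, uniformly in $i$, an upper semicomputable $g_i\ge f_i$ that is always a measure of randomness and that equals $f_i$ whenever $f_i$ already is. Process the output of $M_i$ in stages; at stage $t$, attempt to lower the current $g_i(\omega)$ to the upper bound supplied by $M_i$, but accept the update only if the new sub-level sets $E_r:=\{g_i\le r\}\cap\{0,1\}^N$ still satisfy $\sup_{p\in[0,1]}B_p^N(E_r)\le r$ for every $N$ in whose cube $\omega$ lies and every rational $r\in[0,1]$. The supremum is the maximum on $[0,1]$ of a polynomial in $p$ with rational coefficients, and so is comparable with $r$ by a decidable algebraic test; thus the procedure is effective. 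Updates that fail the test are discarded, so $g_i$ is a measure of randomness by construction; if $f_i$ is already a measure of randomness, no update is ever rejected and $g_i=f_i$.

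\textbf{Step 3: combination.} Define
\[
  f(\omega):=\min\!\left(1,\;\inf_{i\in\N}2^i g_i(\omega)\right).
\]
Then $f$ is upper semicomputable because $\{(\omega,r)\st f(\omega)<r\}=\bigcup_i\{(\omega,r)\st g_i(\omega)<2^{-i}r\}$ is r.e.\ uniformly in $r$. For any power measure $P$ on $\{0,1\}^N$ and any $\epsilon\in(0,1]$ a union bound yields
\[
  P(f\le\epsilon)
  \le
  \sum_{i\in\N}P(g_i\le 2^{-i}\epsilon)
  \le
  \sum_{i\in\N}2^{-i}\epsilon
  =
  \epsilon,
\]
so $f$ is a measure of randomness. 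Any other measure of randomness $f'$ equals $f_i$ for some $i$, hence $g_i=f'$ and $f\le 2^i f'$, which is $f=O(f')$.

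\textbf{Main obstacle.} The substantive ingredient is Step~2: carrying out the admissibility check effectively, without ever rejecting a legitimate update. The check is cleaner in the exchangeable case, since every exchangeable measure on $\{0,1\}^N$ is a convex combination of the uniform measures $U_k$ on $\Omega_k$ (cf.~\eqref{eq:1-1}) and so the condition reduces to the purely combinatorial $|E_r\cap\Omega_k|\le r|\Omega_k|$ for $k=0,\dots,N$; this is precisely why exactly the same construction delivers a universal measure of exchangeability.
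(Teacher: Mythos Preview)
The paper does not supply its own proof, only citing Martin-L\"of (1966) and Vovk/V'yugin (1993) as ``standard''; your construction is precisely that standard argument and is essentially correct.

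Two small points are worth tightening. In Step~3 the displayed union bound is not literally valid: the inclusion $\{f\le\epsilon\}\subseteq\bigcup_i\{g_i\le 2^{-i}\epsilon\}$ can fail when the infimum $\inf_i 2^i g_i(\omega)$ equals $\epsilon$ without being attained. The usual repair is to bound $P(f\le\epsilon)\le P(f<\epsilon')\le\sum_i P(g_i<2^{-i}\epsilon')\le\epsilon'$ for every $\epsilon'>\epsilon$ and let $\epsilon'\downarrow\epsilon$. In Step~2 you quantify the admissibility check over ``every rational $r\in[0,1]$'' and ``every $N$'', which is formally infinitely many tests; you should remark that at each finite stage the current approximation differs from~$1$ on only finitely many sequences and assumes only finitely many values below~$1$, so it suffices to check the inequality at those finitely many thresholds (and lengths), and the stage procedure halts.
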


The proof of Lemma~\ref{lem:universal} is standard;
see, e.g., \cite{Martin-Lof:1966} or \cite[Lemma~4]{Vovk/Vyugin:1993}.

In the algorithmic theory of randomness,
it is customary to measure lack of randomness or exchangeability on the log scale.
Therefore, we fix a universal measure of randomness $f$,
set $\diid:=-\log f$,
and refer to $\diid(\omega)$ as the \emph{deficiency of randomness} of the sequence $\omega\in\{0,1\}^*$.
Similarly, we fix a universal measure of exchangeability $f$,
set $\dE:=-\log f$,
and refer to $\dE(\omega)$ as the \emph{deficiency of exchangeability} of $\omega$.
(Traditionally, the log is binary.)

Proposition~\ref{prop:1} immediately implies
\begin{equation}\label{eq:close}
  \dE(\omega) - O(1) \le \diid(\omega) \le \dE(\omega) + \frac12 \log N + O(1),
\end{equation}
where $\omega$ ranges over $\{0,1\}^*$ and $N$ is the length of $\omega$.
In fact, we can interpret \eqref{eq:close} as the algorithmic version of Proposition~\ref{prop:1}.
Kolmogorov regarded the coincidence to within $\log$ as close enough,
at least for some purposes:
cf.\ the last two paragraphs of \cite{Kolmogorov:1968-Latin};
therefore, he preferred the simpler definition
$\dE(\omega)\approx0$ of $\omega$ being a Bernoulli sequence.

Proposition~\ref{prop:1} is very crude,
and Section~\ref{sec:conclusion} sets the task of obtaining more accurate results.
In fact, such results are known in the context of the algorithmic theory of randomness;
they were obtained in the paper \cite{\OCMXV} written under Kolmogorov's supervision.

To clarify relations between algorithmic randomness and exchangeability,
we will need another notion, binomiality.
The binomial probability distribution $\bin_{N,p}$ on $\{0,\dots,N\}$ with parameter $p$
is defined by
\[
  \bin_{N,p}(\{k\})
  :=
  \binom{N}{k}
  p^k (1-p)^{N-k},
  \quad
  k\in\{0,\dots,N\}.
\]
A \emph{measure of binomiality} is an upper semicomputable function
$f:\{(N,k)\st N\in\N,k\in\{0,\dots,N\}\}\to[0,1]$
such that, for any $N\in\N$, any $p\in[0,1]$,
and any $\epsilon>0$,
\begin{equation*}
  \bin_{N,p}(\{k\st f(N,k)\le\epsilon\})
  \le
  \epsilon.
\end{equation*}
\begin{lemma}
  There exists a measure of binomiality $f$ (called \emph{universal})
  such that any other measure of binomiality $f'$ satisfies $f=O(f')$.
\end{lemma}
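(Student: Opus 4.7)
The plan is to follow the standard construction of a universal algorithmic test, entirely parallel to the proof of Lemma~\ref{lem:universal} for measures of randomness and exchangeability (for which the author cites \cite{Martin-Lof:1966} and \cite[Lemma~4]{Vovk/Vyugin:1993}). The scheme has three ingredients: an effective enumeration of candidates, a uniform ``repair'' that turns any candidate into a valid measure of binomiality without changing it if it was already valid, and a mixing step that yields dominance up to a constant.

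First I would fix an effective enumeration $f_1, f_2, \ldots$ of all upper semicomputable functions $D\to[0,1]$, where $D:=\{(N,k):N\in\N,k\in\{0,\dots,N\}\}$; this is standard, using an enumeration of Turing machines that enumerate rationals strictly above the function value, combined with a running min to enforce semicomputability. Next I would construct from each $f_i$ a repaired version $\tilde f_i$ that is upper semicomputable, satisfies $\tilde f_i \ge f_i$, is a measure of binomiality, and coincides with $f_i$ whenever $f_i$ is already a measure of binomiality. The construction works one $N$ at a time: while enumerating the r.e.\ sets $V_{r}:=\{k:f_i(N,k)<r\}$ for rational $r$, we only allow a new $k$ to enter $V_r$ if doing so keeps $\sup_{p\in[0,1]}\bin_{N,p}(V_r)\le r$; pairs $(N,k)$ that are forever blocked are assigned the next admissible threshold. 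Since $\{0,\dots,N\}$ is finite and the map $p\mapsto\bin_{N,p}(A)$ is a computable polynomial whose maximum over $[0,1]$ is effectively computable to any accuracy, this filtering is algorithmic and produces an upper semicomputable $\tilde f_i$; the fact that $\tilde f_i$ dominates $f_i$ but leaves it unchanged when $f_i$ is already valid is built into the construction.

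Then I would set $f(N,k):=\inf_i 2^i\,\tilde f_i(N,k)$. This $f$ is upper semicomputable, since
\[
  \{(N,k):f(N,k)<r\}
  =
  \bigcup_i
  \{(N,k):\tilde f_i(N,k)<2^{-i}r\}
\]
is a uniformly r.e.\ union of r.e.\ sets. A union bound shows that $f$ is itself a measure of binomiality: for all $N,p,\epsilon$,
\[
  \bin_{N,p}(\{k:f(N,k)\le\epsilon\})
  \le
  \sum_{i=1}^{\infty}
  \bin_{N,p}(\{k:\tilde f_i(N,k)\le 2^{-i}\epsilon\})
  \le
  \sum_{i=1}^{\infty} 2^{-i}\epsilon
  =
  \epsilon.
\]
Universality then follows immediately: any measure of binomiality $f'$ appears as some $f_j$ in the enumeration, so $\tilde f_j=f_j=f'$, and hence $f\le 2^j\tilde f_j=2^j f'$, i.e.\ $f=O(f')$.

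I expect the only nonroutine step to be the repair in the second paragraph: one must check that the stage-by-stage admission procedure is both (a) genuinely upper semicomputable in $(N,k)$ uniformly in $i$, and (b) a no-op on valid inputs, so that the $O$-constant in universality is controlled solely by the index $j$ of $f'$ in the enumeration. The rest is a direct mimic of Lemma~\ref{lem:universal}, and the enlargement of the domain from $\{0,1\}^*$ to $D$ is immaterial because $D$ is itself a computable set.
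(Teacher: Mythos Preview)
Your proposal is correct and follows precisely the standard Martin-L\"of construction that the paper invokes; the paper gives no explicit proof for this lemma, treating it as an immediate analogue of Lemma~\ref{lem:universal} (for which it cites \cite{Martin-Lof:1966} and \cite[Lemma~4]{Vovk/Vyugin:1993}), so your enumerate--repair--mix argument is exactly what is intended. Two cosmetic points: cap your $f$ at $1$ so that it lands in $[0,1]$ as required, and in the union-bound step pass through a strict inequality $\epsilon'>\epsilon$ (since $\inf_i 2^i\tilde f_i\le\epsilon$ does not literally give $\tilde f_i\le 2^{-i}\epsilon$ for some $i$) and then let $\epsilon'\downarrow\epsilon$.
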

We fix a universal measure of binomiality $f$,
set $\dbin(k;N):=-\log f(N,k)$,
and refer to $\dbin(k;N)$ as the \emph{deficiency of binomiality} of $k$ (in $\{0,\dots,N\}$).

\begin{proposition}\label{prop:Vovk}
  For any constant $\epsilon>0$,
  \begin{multline*}
    (1-\epsilon)
    \left(
      \dE(\omega) + \dbin(k;N)
    \right)
    -
    O(1)
    \le
    \diid(\omega)\\
    \le
    (1+\epsilon)
    \left(
      \dE(\omega) + \dbin(k;N)
    \right)
    +
    O(1),
  \end{multline*}
  $N$ ranging over $\N$,
  $\omega$ over $\{0,1\}^N$,
  and $k$ being the number of 1s in $\omega$.
\end{proposition}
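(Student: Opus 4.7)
The plan is to establish the two inequalities separately by constructing candidate measures of the ``opposite'' type and then invoking universality. The underlying structural fact in both directions is the conditional decomposition: under any power $B_p^N$, the count $k=k(\omega)$ has marginal distribution $\mathrm{bin}_{N,p}$, while $\omega$ given $k$ is uniform on $\Omega_k$ (an exchangeable distribution). This lets me pass freely between randomness on one hand and the pair (exchangeability within $\Omega_k$, binomiality of $k$) on the other.

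For the lower bound $\diid\ge(1-\epsilon)(\dE+\dbin)-O(1)$, I would set
\[
  f^*(\omega) := \min\!\bigl(1,\;C_\epsilon\,f_E(\omega)^{1-\epsilon}\,f_{\bin}(k(\omega);N)^{1-\epsilon}\bigr)
\]
and verify it is a measure of randomness for a constant $C_\epsilon$ depending only on $\epsilon$. Upper semicomputability is inherited from $f_E$ and $f_{\bin}$. For the p-variable property under $B_p^N$, condition on $k$ (making $f_{\bin}(k;N)$ a constant) and use that $f_E|_{\Omega_k}$ is a p-variable under $U_k$ while $f_{\bin}(\cdot;N)$ is a p-variable under $\mathrm{bin}_{N,p}$; setting $s:=(\delta/C_\epsilon)^{1/(1-\epsilon)}$, a tower/tail integral reduces the bound to $\Expect_{\mathrm{bin}_{N,p}}[\min(1,s/f_{\bin}(k;N))]\le s(1-\log s)$, and the inequality $s^\epsilon(1-\log s)\le C_\epsilon$ holds uniformly in $s\in(0,1]$ as soon as $C_\epsilon\ge e^{\epsilon-1}/\epsilon$ (which is the maximum of $s^\epsilon(1-\log s)$). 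Universality of $f_{iid}$ then gives $f_{iid}=O(f^*)$, which on taking logs is the claimed inequality.

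For the upper bound $\diid\le(1+\epsilon)(\dE+\dbin)+O(1)$, I would instead extract witnesses of exchangeability and binomiality deficiency from $f_{iid}$. Define
\[
  f_E^{**}(\omega) := \min\!\bigl(1,\;(f_{iid}(\omega)/\UbinP(\{k\}))^{1/(1+\epsilon)}\bigr);
\]
the inequality $U_k(f_{iid}\le\eta)\le\eta/\UbinP(\{k\})$, obtained by setting $p=k/N$ in the randomness p-variable bound $B_p^N(f_{iid}\le\eta)\le\eta$, forces $U_k(f_E^{**}\le\delta)\le\delta^{1+\epsilon}\le\delta$, so $f_E^{**}$ is a measure of exchangeability. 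Paired with a companion measure of binomiality $f_{\bin}^{**}(N,k)$ arranged so that $f_E^{**}(\omega)\,f_{\bin}^{**}(k;N)\le O(1)\,f_{iid}(\omega)^{1/(1+\epsilon)}$, universality applied to both $f_E$ and $f_{\bin}$ delivers $(f_E f_{\bin})^{1+\epsilon}\le O(1)\,f_{iid}$, equivalent to the claimed inequality on logs.

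The main obstacle lies in the construction of $f_{\bin}^{**}$ with an absolute constant. The naive candidate $f_{\bin}^{**}(N,k):=\min(1,C_N\,\UbinP(\{k\})^{1/(1+\epsilon)})$ demands $C_N\sim(N+1)^{1/(1+\epsilon)}$, since controlling $\mathrm{bin}_{N,p}(\{k:\UbinP(\{k\})\le\eta\})$ through a union bound loses a factor of $N+1$, leaking an additive $O(\log N)$ into the final inequality instead of the desired $O(1)$. The $(1+\epsilon)$-slack in the proposition is precisely what one needs to absorb this polynomial factor into the $\epsilon(\dE+\dbin)$ term, but doing so uniformly in $N$ requires a \emph{coordinated} rather than independent construction of $f_E^{**}$ and $f_{\bin}^{**}$, so that the $N$-dependent loss appears only once; the boundary regime where $\dE+\dbin$ is too small for the absorption must be handled directly, exploiting the fact that $\diid$ is itself small there (as witnessed by extreme cases such as $\omega=1\ldots1$, where every measure of randomness is forced to equal $1$). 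This delicate balancing is the algorithmic counterpart of the tight singleton identity $\UiidP(\{\omega\})=\UEP(\{\omega\})\cdot\UbinP(\{k\})$ and is the technical heart of the argument in \cite{\OCMXV}.
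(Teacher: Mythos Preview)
The paper does not give its own proof of this proposition: it simply records that the statement ``follows immediately from (and is stated, in a more precise form, after) \cite[Theorem~1]{\OCMXV}'', and in Appendix~\ref{sec:ATC} quotes that theorem as the prefix-complexity identity
\[
  \Diid(\omega)
  =
  \Bigl(\log\tbinom{N}{k}-\KP(\omega\mid N,k,\Dbin(k;N))\Bigr)
  +\Dbin(k;N)+O(1),
\]
from which the $(1\pm\epsilon)$ version for the Martin-L\"of deficiencies $\diid,\dE,\dbin$ is read off after absorbing the plain/prefix corrections into the $\epsilon$-slack. So there is no ``paper's own argument'' to compare against beyond this citation.

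Your lower bound is a genuine, self-contained proof and goes further than the paper. The construction $f^*=\min(1,C_\epsilon f_E^{1-\epsilon}f_{\bin}^{1-\epsilon})$ is a valid measure of randomness: your conditioning-on-$k$ computation is correct (each $U_k$ is exchangeable, so $f_E$ is a p-variable under it), the tail bound $\Expect_{\bin_{N,p}}[\min(1,s/f_{\bin})]\le s(1-\log s)$ is right, and the choice $C_\epsilon=e^{\epsilon-1}/\epsilon\ge1$ indeed makes $s^\epsilon(1-\log s)\le C_\epsilon$ uniformly on $(0,1]$. Universality then gives $(1-\epsilon)(\dE+\dbin)-O(1)\le\diid$.

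Your upper bound, however, is not a proof but a sketch with a gap you yourself flag. The candidate $f_E^{**}=(f_{\mathrm{iid}}/\UbinP(\{k\}))^{1/(1+\epsilon)}$ is a legitimate measure of exchangeability, as you check, but the companion $f_{\bin}^{**}=C_N\,\UbinP(\{k\})^{1/(1+\epsilon)}$ needs $C_N\ge(N+1)^{1/(1+\epsilon)}$, and the resulting inequality is only $\diid\le(1+\epsilon)(\dE+\dbin)+O(\log N)$. The point you make---that the $\epsilon$-slack ``should'' absorb this---does not go through as stated: $\log N$ is \emph{not} dominated by $\epsilon(\dE+\dbin)+O(1)$ when $\dE+\dbin$ is bounded while $N$ grows (e.g., $\omega=0\ldots0$). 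What \cite{\OCMXV} actually does is different: it works with prefix complexity and makes the exchangeability side depend on the extra conditioning $\Dbin(k;N)$, so that the correction term is $O(\log(\dE+\dbin+1))$ rather than $O(\log N)$; \emph{that} logarithm is absorbed by $\epsilon(\dE+\dbin)+O_\epsilon(1)$. Your ``coordinated construction'' paragraph gestures at this but does not carry it out, so in this direction your proposal, like the paper, ultimately rests on \cite{\OCMXV}.
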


Proposition~\ref{prop:Vovk} follows immediately from
(and is stated, in a more precise form, after) \cite[Theorem 1]{\OCMXV}.
It says, informally, that the randomness of $\omega$ is equivalent
to the conjunction of two conditions:
$\omega$ should be exchangeable, and the number of 1s in it should be binomial.
For example, suppose that $N$ is a large even number
and the number of 1s in $\omega\in\{0,1\}^N$ is $k=N/2$.
Then $\omega$ might be perfectly exchangeable whereas it will not be random
since it belongs to the set of all binary sequences with the number of 1s precisely $N/2$,
whose probability \eqref{eq:half} is small.
(Example~\ref{ex:counterexample} was based on this observation
expressed in a different language.)

\section{Connections with the algorithmic theory of complexity}
\label{sec:ATC}

In Appendix~\ref{sec:ATR} we gave the definition of randomness equivalent to Kolmogorov's
but used Martin-L\"of's language of statistical tests.
Kolmogorov himself used the language of algorithmic complexity,
nowadays known as Kolmogorov complexity.
Apart from his papers \cite{Kolmogorov:1965-Latin,Kolmogorov:1968-Latin,Kolmogorov:1983-Latin}
on this topic,
Kolmogorov was also a co-author of \cite{Kolmogorov/Uspensky:1987-Latin},
which was based on his ideas and publications,
although he did not see the final version of that paper
\cite[Introduction]{Kolmogorov/Uspensky:1987-Latin}
and did not take part in preparing the talk on which it was based
\cite[p.~380]{Uspensky:1993}.
Another valuable source is the record of his 1982 talk \cite{Kolmogorov:1983LNM}.
From now on I will assume knowledge of some basic notions of the theory of Kolmogorov complexity.

Kolmogorov's original notion of randomness for an element $\omega$ of a simple finite set $M$
was that $K(\omega)\approx-\log\left|M\right|$,
where $K$ is Kolmogorov complexity and $\log$ is binary log
(see \cite[Section 4]{Kolmogorov:1965-Latin}).
Martin-L\"of \cite{Martin-Lof:2005} modified this requirement to $K(\omega\mid M)\approx-\log\left|M\right|$,
where $K$ now stands for conditional Kolmogorov complexity.
In his 1968 paper \cite[Section 2]{Kolmogorov:1968-Latin}
Kolmogorov gave his alternative formalization of von Mises's random sequences,
with a reference to Martin-L\"of:
namely, Kolmogorov said that a binary sequence $\omega$ of length $N$ containing $k$ 1s is \emph{Bernoulli}~if
\[
  K(\omega\mid k,N)
  \approx
  \log
  \binom{N}{k}.
\]
It is natural to call the difference
\begin{equation}\label{eq:dE}
  \dE(\omega)
  :=
  \log
  \binom{N}{k}
  -
  K(\omega\mid k,N)
\end{equation}
the \emph{deficiency of exchangeability} of $\omega$
(in terminology close to that of \cite{Kolmogorov:1983LNM} and \cite{Kolmogorov/Uspensky:1987-Latin}).
This definition is equivalent to the one given in Appendix~\ref{sec:ATR}
(in the sense that the difference between \eqref{eq:dE}
and $\dE(\omega)$ as defined in Appendix~\ref{sec:ATR} is bounded in absolute value
by a constant independent of $\omega$;
this is the first italicized statement in \cite[p.~616]{Martin-Lof:1966}).

Being Bernoulli in the sense of Kolmogorov
does not fully reflect the intuition of being random,
i.e., being a plausible outcome of a sequence of $N$ tosses of a possibly biased coin.
This intuition is better captured by
\begin{equation}\label{eq:diid}
  \diid(\omega)
  :=
  \inf_{p\in[0,1]}
  \left(
    -\log B_p^N(\omega)
    -
    K(\omega\mid p,N)
  \right),
\end{equation}
which we call the \emph{deficiency of randomness} of $\omega$, being small.
This is equivalent to the definition given in Appendix~\ref{sec:ATR}.

Proposition~\ref{prop:Vovk} in Appendix~\ref{sec:ATR}
about the difference between \eqref{eq:dE} and \eqref{eq:diid}
can be made much more precise if we modify our definitions.
Theorem~1 of \cite{\OCMXV} shows that
\begin{equation*}
  \Diid(\omega)
  =
  \left(
    \log\binom{N}{k} - \KP\left(\omega\mid N,k,\Dbin(k;N)\right)
  \right)
  +
  \Dbin(k;N)
  +
  O(1),
\end{equation*}
where $N$ ranges over $\N$,
$\omega$ over $\{0,1\}^N$,
$k$ is the number of 1s in $\omega$,
$\KP$ is prefix complexity,
$\Diid$ is the analogue of $\diid$
using prefix instead of Kolmogorov complexity,
and $\Dbin(k;N)$ is the \emph{prefix deficiency of binomiality} of $k$ defined by
\[
  \Dbin(k;N)
  :=
  \inf_{p\in[0,1]}
  \left(
    -\log\bin_{N,p}(k)
    -
    \KP(k\mid p,N)
  \right),
\]
where $\bin_{N,p}$ is the binomial probability measure on $\{0,\dots,N\}$ with parameter $p$,
as defined earlier.
Theorem~2 of \cite{\OCMXV} characterizes $\Dbin(k;N)$ in terms of prefix complexity,
showing that it can be as large as $\frac12\log N+O(1)$.
\end{document}